\newtheorem{theorem}{Theorem}
\newtheorem{lemma}{Lemma}
\newtheorem{proposition}{Proposition}
\newtheorem{remark}{Remark}
\newtheorem{definition}{Definition}
\newtheorem{corollary}{Corollary}
\newcommand{\pr}[1]{\left(#1 \right)}
\newcommand{\n}[2]{{\left\| #1 \right\|}_{#2}}
\newcommand{\f}[2]{\frac{#1}{#2}}
\newcommand{\lan}[1]{\left\langle #1\right\rangle}
\newcommand{\wh}[1]{\widehat{#1}}
\newcommand{\tn}[1]{\textnormal{#1}}
\newcommand{\wt}[1]{\widetilde{#1}}
\newcommand{\ga}{\gamma}
\newcommand{\Ga}{\Gamma}
\newcommand{\de}{\delta}
\newcommand{\ve}{\varepsilon}
\newcommand{\si}{\sigma}
\newcommand{\Om}{\Omega}
\newcommand{\br}{{\mathbb R}}
\newcommand{\bt}{{\mathbb T}}
\newcommand{\bz}{\mathbb Z}
\newcommand{\bp}{\mathbf P}
\newcommand{\bn}{\mathbb{N}}
\newcommand{\nr}{\mathcal{NR}}
\newcommand{\bb}{\mathcal{HL}}
\newcommand{\cf}{\mathcal F}
\newcommand{\rr}{\mathcal R}
\newcommand{\p}{\partial}
\newcommand{\ds}{\displaystyle}
\title[Smoothing and growth bound of polynomial-gKdV]{Smoothing and growth bound of periodic generalized Korteweg-de Vries equation}
\author{Seungly Oh, Atanas G. Stefanov}
\begin{document}
	
	\thanks{Atanas Stefanov is partially supported by NSF-DMS 1908626.}
\maketitle

\begin{abstract}
For generalized KdV models with polynomial nonlinearity, we establish a local smoothing property in $H^s$ for $s>\f{1}{2}$.  Such smoothing effect persists  globally, provided that the $H^1$ norm does not blow up in finite time. More specifically, we show that a translate of the nonlinear part of the solution gains $\min(2s-1,1)-$ derivatives for $s>\f{1}{2}$.  Following a new simple method, which is of independent interest, we establish that, for $s>1$, $H^s$~norm of a solution grows at most by $\lan{t}^{s-1+}$ if $H^1$~norm is a priori controlled.
\end{abstract}

In this article, we shall be concerned with real-valued solutions of the periodic generalized KdV given by
\begin{equation}\label{eq:gKdV}
\left|\begin{array}{l}u_t + u_{xxx} =  \p_x(P(u)), \qquad x \in \bt, \\
u(0) = f \in H^s,  \qquad s> \f{1}{2}\end{array}\right.
\end{equation}
where $P(\cdot)$ is a polynomial.  Let us take the opportunity to review some recent developments about the model \eqref{eq:gKdV} as well as other related dispersive models, as they pertain to our  goal in this article. 
\subsection{Cauchy theory, local smoothing effect and polynomial bounds}
Classical models which are given by  $P(u)=u^2, u^3$ are the ubiquitous KdV and mKdV models which are completely integrable.  These are easily the most well-studied PDE models, modeling a uni-directional propagation of surface water wave along shallow channels.  

Sharp results for Cauchy problems involving these models are available.   Using inverse scattering, KdV (resp. mKdV) is globally well-posed on $H^{-1}$ (resp. $L^2$) \cite{KP1, KP2} which is shown to be sharp in \cite{mol1, mol2}.  Using perturbative approaches, KdV (resp. mKdV) on the torus is known to be globally well-posed in $H^s$ for $s\geq -\f{1}{2}$ (resp. $s\geq \f{1}{3}$) \cite{Bour, I1, KPV, japanese}.  Also see \cite{titi, KO, MDV, Zhou} for unconditional uniqueness results for these models.  Some authors have also considered models with a mixed nonlinearity, in the form $P(u)=a u^2+b u^3$ - referred to as the Gardner model. Gardner equation can be derived from KdV by Miura transform, and a number of properties regarding its solutions are inherited from KdV theories \cite{KP, miura}. 

For gKdV with general polynomial nonlinearity, local well-posedness in $H^s$ is known \cite{I3} for $s\geq \f{1}{2}$.  For global well-posedness, monomial-type nonlinearities $P(u) = u^{k+1}$ are considered and the distinction between focusing and defocusing cases become more significant due to supercricality.  For defocusing cases, global well-posedness in $H^s$ is known \cite{Bao} for $s\geq \f{1}{2}$ for $k=3$ and for $s> \f{5}{9}$ for $k=4$.  Furthermore for the supercritical regime (i.e. $k>4$), defocussing gKdV is known \cite{I3} to be globally well-posed in $H^s$ for $s> \f{13}{14} - \f{2}{7k}$.  In \cite{blowup}, focusing mass-critical gKdV on the real-line is shown to contain finite-time blow-up solutions in $H^s$, while we are unaware of  analogous blow-up result   for the torus.

Another feature, which we will be a central theme of interest for us in this article,
concerns nonlinear smoothing property.  Informally, nonlinear smoothing refers to the idea that the roughest part of a solution is contained in the free solution (in some cases, another explicit expression of initial data \cite{fnls, Oh}), whereas the remainder is smoother than the initial data.  These issues have been thoroughly explored in many contexts, see \cite{titi, linear,  erdogan, japanese, Oh, TT}. 

Either directly or indirectly, nonlinear smoothing property has been one of the main ingredients in establishing polynomial bounds for $H^s$ norm of solutions.  Quest for obtaining polynomial bounds was initiated by Bourgain's work in \cite{bour3} and was followed by \cite{I2, erdogan, fnls, Soh1, Soh2, St1, St2, St3} among others.  Results concerning higher Sobolev norms demonstrate low-to-high frequency cascade in solutions and has implications to the physical phenomenon of weak-turbulence.  See \cite{Soh1} and references there in.

Results on lower bounds for Sobolev norms of solutions are more scarce.  In \cite{bour4}, a perturbation of a nonlinear wave equation is shown to contain solutions whose Sobolev norms grow polynomially in time.  Also, authors in \cite{I4} demonstrated that 2D cubic nonlinear Schr\"odinger equation on the torus contains solutions which grows over time, although the rate of growth is not necessarily a polynomial-type.    

For many completely integrable models, uniform-in-time bounds are available.  For instance, $H^s$~norm of solutions of KdV is shown \cite{KVZ, KT} to be uniformly bounded in time for $s\geq -1$ and also $s>-\f{1}{2}$ in case of mKdV.  These results take advantage of inverse scattering technique and do not extend to related non-integrable models.  For this reason, polynomial-bound results are still relevant for non-integrable perturbations of KdV and mKdV using linear potentials or variable coefficients \cite{erdogan, St2}.
\subsection{Main results}
Going back to \eqref{eq:gKdV}, let us make some reductions, which will be helpful in the sequel. First,  the solutions to \eqref{eq:gKdV} (formally) conserve their  mean value 
\begin{equation}
\label{102}
\int_\bt u(t,x)\, dx = \int_\bt f(x) \, dx.
\end{equation}

The transformation $u(t,x) \mapsto u(t,x) - \int_\bt f(x')\,dx'$ changes RHS of \eqref{eq:gKdV} to be $\p_x (P(u+ \int f\,dx'))$ which still belongs to the class of derivative polynomial nonlinearity.  Thus, we may assume that the solution $u$ of \eqref{eq:gKdV} has the mean-zero property.  Further, it is our standing assumption that the solution is real-valued. Additionally, the transformation $u(t,x) \mapsto u(t,x-ct)$ can eliminate any linear term in $P(u)$ and any constant term is removed by the derivative.  So, we may assume that the smallest possible degree monomial appearing in $P(\cdot)$ is a quadratic term.  

Below, we state local well-posedness of \eqref{eq:gKdV} for $H^{\f{1}{2}+}(\bt)$ data. 
\begin{theorem}\label{th:lwp}
	Let $s> \f{1}{2}$ and $0< \ve \leq s-\f{1}{2}$. Then there exists $T = T(\n{f}{H^{\f{1}{2}+\ve}})>0$ such that, the equation \eqref{eq:gKdV} has a unique solution $u$ given as a translate of  $\wt{u} \in Y_T^s\hookrightarrow C^0_t ([0,T]; H^s_x(\bt))$ given in \eqref{eq:u2}.  Here $Y_T^s$ is an auxiliary space to be defined in Section~\ref{sec:not}.\\
	
	Furthermore, this solution~$u$ of \eqref{eq:gKdV} satisfies
	\[
\n{u}{C^0_t([0,T]; H^s_x} \lesssim 	\n{\wt{u}}{Y^s_T} \leq C(P, s, \n{f}{H^{\f{1}{2}+\ve}}) \n{f}{H^s}.
	\]
	In particular, if we assume an a priori control of $H^1$ norm, then this solution extends globally in time for $s\geq 1$.
\end{theorem}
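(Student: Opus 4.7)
First I would apply the reductions noted after \eqref{102}: subtract the (conserved) mean so that $u$ is mean zero, and use $u(t,x) \mapsto u(t,x-ct)$ to strip any linear term from $P$, so that the nonlinearity begins at the quadratic level. The next step is a gauge transformation $u(t,x) = \wt u(t, x - c(t))$, with $c(t)$ built from the conserved $L^2$ mass and, if needed, iterated corrections from lower-order conserved densities. The phase $c(t)$ is designed to cancel precisely those pieces of $\p_x P(u)$ whose Fourier support lies in the resonant set $\xi_1^3+\cdots+\xi_k^3 = (\xi_1+\cdots+\xi_k)^3$, producing the modified equation \eqref{eq:u2} for $\wt u$ whose nonlinearity is free of the resonant obstruction that would otherwise defeat a direct fixed-point argument on $\bt$.

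I would then set up the Duhamel formulation for $\wt u$ and run a contraction in a closed ball of the auxiliary space $Y^s_T$ (an $X^{s,b}$-type space adapted to the Airy propagator on $\bt$). Three ingredients are required: the linear bound $\n{\eta(t/T)\,e^{-t\p_x^3} f}{Y^s_T} \lesssim \n{f}{H^s}$; an inhomogeneous Duhamel estimate controlling $\int_0^t e^{-(t-t')\p_x^3} F(t')\,dt'$ in $Y^s_T$ by a dual-type norm $N^s_T$ of $F$; and the central multilinear estimate
\[
\n{\p_x(\wt u_1 \cdots \wt u_k)}{N^s_T} \lesssim T^{\de}\, \n{\wt u_k}{Y^s_T} \prod_{j=1}^{k-1} \n{\wt u_j}{Y^{\frac{1}{2}+\ve}_T},
\]
valid once the resonant contribution is subtracted. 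Summing over the monomials composing $P$ and choosing $T$ small depending only on $\n{f}{H^{\frac{1}{2}+\ve}}$ yields the contraction, and the linear dependence on $\n{\wt u_k}{Y^s_T}$ delivers the bound $\n{\wt u}{Y^s_T} \leq C(P,s,\n{f}{H^{\frac{1}{2}+\ve}})\,\n{f}{H^s}$ stated in the theorem.

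The main obstacle is the multilinear estimate: the derivative $\p_x$ produces a loss that must be compensated by the modulation gain coming from the Airy symbol, and on $\bt$ this gain degenerates precisely on the resonant set. The standard route is a Littlewood--Paley decomposition combined with a case analysis separating high--high from high--low interactions, and using the $k$-linear resonance identity to show that off-diagonal modulation dominates the top frequency away from resonance; the gauge transformation is tuned to cancel the leftover resonant configurations. For $k=2,3$ this is the classical Bourgain/Kenig--Ponce--Vega analysis, while the general $k$-linear case is handled by induction on the degree, together with Sobolev-type embeddings of $Y^{\frac{1}{2}+\ve}_T$ into mixed space--time $L^p$ norms to distribute the $k-1$ low-regularity factors.

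For the final assertion, the global extension under a priori $H^1$ control when $s \geq 1$ follows by iteration. Since the local time $T$ depends only on $\n{f}{H^{\frac{1}{2}+\ve}} \leq \n{f}{H^1} \leq M$ for a uniform constant $M$, one can step forward repeatedly on intervals of uniform length $T(M)>0$. The $H^s$ norm may increase geometrically across iterations but remains finite on any bounded time interval, ruling out finite-time blow-up.
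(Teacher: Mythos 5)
Your overall framework — gauge away the resonant obstruction, then run a contraction in a Bourgain-type space $Y^s_T$ with a degree-$k$ multilinear estimate $\n{\p_x(\wt u_1\cdots\wt u_k)}{N^s_T}\lesssim T^{\de}\n{\wt u_k}{Y^s_T}\prod_{j<k}\n{\wt u_j}{Y^{1/2+\ve}_T}$, and globalize for $s\geq 1$ by iterating a uniform time step — matches the paper's proof. However, two features of the gauge step as you describe it are off, and they matter.

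First, you claim that $c(t)$ is ``built from the conserved $L^2$ mass and, if needed, iterated corrections from lower-order conserved densities.'' This is not what happens, and in general it cannot: the phase in \eqref{eq:u1} is $\exp\bigl(ik\int_0^t\int_\bt an\,u^{n-1}+bm\,u^{m-1}\,dx\,dt'\bigr)$, i.e.\ the translate $c(t)=\int_0^t\int_\bt(an\,u^{n-1}+bm\,u^{m-1})\,dx\,dt'$ is the time-integral of $\int u^{p}\,dx$ for arbitrary $p$, which is \emph{not} conserved except in special cases (e.g.\ $p=2$). The transformation is a self-consistent, time-dependent translation depending on the unknown itself; no conservation law is invoked or needed.

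Second, and more substantively, you say the phase is ``designed to cancel precisely those pieces of $\p_x P(u)$ whose Fourier support lies in the resonant set.'' That is not achievable by a translation, and the paper does not attempt it. The resonant set decomposes (by inclusion--exclusion over $R_k=\cup_l R_k^l$) into $\rr^1+\rr^2$; the translation \eqref{eq:u1} removes only $\rr^1$ (the single-collision terms $ik\,u_k\int u^{p-1}\,dx$). The multi-collision remainder $\rr^2$ is still present in \eqref{eq:util} and must be \emph{estimated}, not removed. The paper's point is that this is possible because a double resonance forces a second internal frequency to equal $k$ and hence a third to be $\gtrsim k$, placing $\rr^2$ in the high-high regime (Case C/D of Proposition~\ref{pro:disp}) where three or four large internal frequencies already supply enough derivatives for the $Z^s$ bound. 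Your proposal needs a concrete argument for $\rr^2$ or it has a gap where a direct size-estimate on the untransformed resonant term would fail. Beyond that, your contraction and globalization steps are in line with the paper, though you should note (as the paper does) that $X^{s,1/2}$ alone is insufficient on $\bt$ and the space must be intersected with $H^s_xL^1_\tau$ to embed in $C^0_tH^s_x$.
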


Local well-posedness of \eqref{eq:gKdV} in $H^s$ for $s\geq \f{1}{2}$ was proved in \cite{I3}.  Theorem~\ref{th:lwp} does not contain the endpoint $s=\f{1}{2}$ but is otherwise sharp in the sense of analytic well-posedness.  

We now redirect our discussion to the smoothing property, which is given below. 
\begin{theorem}\label{th:smoothing}
	Let $s\geq \f{1}{2}$ and $u$ be the solution given in Theorem~\ref{th:lwp} and let $\wt{u}$ be a translate of $u$ given in \eqref{eq:u1}.  Then for each $0<\ga <\min (2s-1, 1)$, there exists $T =T(\n{f}{H^{\f{1}{2}+\ve}})>0$ satisfying 
	\[
	\n{\wt{u}(t) - e^{t\p_x^3} f}{C_t^0 ([0,T]; H^{s+\ga}_x(\bt))} \leq C(\ga, P, \n{f}{H^{\min(s,1)}}) \n{f}{H^s} \qquad \textnormal{ for all } 0\leq t \leq T. 
	\]
\end{theorem}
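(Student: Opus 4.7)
The plan is to estimate $\tilde{u} - e^{t\p_x^3}f$ directly in the space $Y_T^{s+\ga}$, starting from the Duhamel formula
\[
\tilde{u}(t) - e^{t\p_x^3}f = \int_0^t e^{(t-t')\p_x^3}\bigl[\p_x P(\tilde{u}(t')) - c(\tilde{u}(t'))\p_x \tilde{u}(t')\bigr]\,dt',
\]
where the drift $c(\tilde u)$, induced by the spatial translate in \eqref{eq:u1}, is chosen precisely to cancel the zero-modulation (fully resonant) component of $\p_x P(\tilde u)$. The target is a $k$-linear estimate on the Duhamel integral of the bracketed modified nonlinearity in $Y_T^{s+\ga}$, with constants polynomial in $\n{f}{H^{\min(s,1)}}$; applied to the solution produced by Theorem~\ref{th:lwp}, this yields the claimed smoothing via the embedding $Y_T^{s+\ga}\hookrightarrow C_t^0 H^{s+\ga}_x$.

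Decomposing $P(u)=\sum_{k\geq 2}a_k u^k$ into monomials, I would treat each $\p_x(\tilde u^k)$ separately via its Fourier representation on the set $n_1+\cdots+n_k=n$, governed by the resonance function $\Om_k = n^3-\sum_j n_j^3$: for $k=2$, $\Om_2 = 3 n_1 n_2 n$; for $k=3$, $\Om_3 = 3(n_1+n_2)(n_2+n_3)(n_1+n_3)$; and more generally $|\Om_k|\gtrsim |n|N_{\max}^2$ away from an explicit resonant stratum. Off that stratum, the $X^{s,b}$-type modulation weight $\lan{\tau - n^3}$ absorbs a fractional power of $|\Om_k|$ and supplies the gain $|\Om_k|^{-\ga}$, hence the $\min(2s-1,1)$ extra derivatives; this is the familiar differentiation-by-parts / normal-form mechanism recast in Bourgain-space language. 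On the resonant stratum itself, the gain must come from the translate rather than from dispersion: for $k=2$ the stratum $n_1 n_2 = 0$ is removed by the mean-zero reduction already in force; for $k=3$ the stratum $n_i+n_j = 0$ produces the familiar $\n{\tilde u}{L^2}^2\,\p_x\tilde u$ term which, since $\n{\tilde u}{L^2}$ is conserved, is cancelled exactly by a time-independent drift $c$; for $k\geq 4$ the resonant pieces telescope into sub-maximal multilinear expressions that either feed into $c(\tilde u)$ or are already at $H^{s+1}$-regularity because Sobolev embedding absorbs the low-frequency factors. The core estimate thus takes the form
\[
\biggl\|\int_0^t e^{(t-t')\p_x^3}\p_x\Pi_{\tn{nr}}(\tilde u^k)(t')\,dt'\biggr\|_{Y^{s+\ga}_T}\leq C\bigl(P,\n{f}{H^{\min(s,1)}}\bigr)\,\n{\tilde u}{Y^s_T}^k,
\]
with $\Pi_{\tn{nr}}$ projecting away from the resonant stratum.

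The main obstacle I anticipate is the sharpness of the multilinear estimate at the threshold $s=\f{1}{2}+$: the restriction $\ga<2s-1$ is exactly the amount by which two factors at regularity $H^{\f{1}{2}+\ve}$ can be paired through the dispersive weight $|\Om_k|^{-\ga/2}$ before the sums over $n_1,\ldots,n_{k-1}$ fail to converge, while the ceiling $\ga\leq 1$ records the single $\p_x$-loss in the nonlinearity that dispersion alone cannot recover. Handling all high-high-to-low and high-low-to-high interactions at degree $k\geq 4$, and synchronizing the modulation-dyadic with the frequency-dyadic decompositions so that the resulting geometric series close, is the computational crux; the global-in-time extension for $s\geq 1$ then follows from Theorem~\ref{th:lwp} once an a priori $H^1$ bound is available.
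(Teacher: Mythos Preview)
Your plan has a genuine gap: direct estimation of the Duhamel integral in $Y_T^{s+\ga}$ cannot succeed for any $\ga>0$, because in the dominant interaction the modulation weight provides only \emph{half} a power of the resonance function, not the full power you implicitly invoke. Concretely, in the high--low non-resonant regime (one frequency $k_1$ dominates, $\sum_{j\geq 2}k_j\neq 0$) one has $|\Om_k|\gtrsim k_{\max}^2$, so the $Z^{s+\ga}$ weight $\lan{\tau-k^3}^{-1/2}$ yields at best $|\Om_k|^{-1/2}\sim k_{\max}^{-1}$; this exactly cancels the $\p_x$ in the nonlinearity and leaves zero room for smoothing. The paper makes this explicit: Lemma~\ref{le:lwp} (the non-smoothing estimate) is sharp in Case~A of Proposition~\ref{pro:disp}, and the remark following that proposition says verbatim that ``a normal form must be used because the gain of $H_n^{1/2}\gtrsim k_{\max}^1$ is insufficient for any $\ga>0$''. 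Your sentence ``the $X^{s,b}$-type modulation weight \ldots\ supplies the gain $|\Om_k|^{-\ga}$'' is therefore the step that fails.

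What the paper actually does is perform a \emph{substitution} normal form: write $\wt u = e^{t\p_x^3}f + T^n_{\mathcal{NF}}(e^{t\p_x^3}f,\wt u,\ldots,\wt u)+\cdots+w$ as in \eqref{eq:transf}, so that the high--low piece $\bb^n[e^{t\p_x^3}f,\ldots]$ is removed from the equation for $w$ at the cost of generating remainder terms \eqref{eq:w1}--\eqref{eq:w6}. This buys the full $|H_n|^{-1}$ rather than $|H_n|^{-1/2}$. But the remainder analysis is not routine: the term $\bb^n[T^n_{\mathcal{NF}},\wt u,\ldots,\wt u]$ in \eqref{eq:w5} \emph{fails} every size estimate (both high-frequency factors in the numerator sit at the top frequency), and the proof of Lemma~\ref{le:mix} hinges on an algebraic cancellation---the symbol is split as $\si=(\si-\mu)+\mu$ with $\mu=\frac{i}{3(k_2+\cdots+k_n)}$, and the contribution of $\mu$ vanishes identically by odd symmetry (morally $\int_\bt(\p_x^{-1}g)g\,dx=0$). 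Your outline does not contain this mechanism, and the vague claim that for $k\geq 4$ ``the resonant pieces telescope into sub-maximal multilinear expressions \ldots\ already at $H^{s+1}$-regularity'' does not capture it either: the problematic term is non-resonant (Case~B arises only after the normal form is applied), and it is not smoother by Sobolev embedding alone.
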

Statement of Theorem~\ref{th:smoothing} is in the same vein as results obtained in \cite{fnls, Oh} where smoothing is obtained after imposing a resonant phase-shift on either the nonlinear solution or the free solution.  In this case, the phase-shift \eqref{eq:u1} is simply a translation which is invertible and well-behaved.  As in \cite{erdogan, Oh},  such smoothing property can be shown to persist globally in time if $H^s$~norm of the underlying solution does not blow up in  finite time.  In \cite{erdogan, Oh}, KdV is shown to have a smoothing of order $1-\ve$ for $f\in H^s$ for $s > -\f{1}{2}$.  Theorem~\ref{th:smoothing} demonstrates that such smoothing effect is shared by gKdV in higher regularity, $s\geq 1$.  Low-regularity smoothing is unavailable for gKdV due to  absence of required Strichartz estimates.

Our next topic concerns polynomial bounds. Clearly, in order to discuss polynomial bounds, one needs global solutions. 
Our work does not present new results in this direction, instead we focus on models in the form \eqref{eq:gKdV} for which $H^1$ norm of a solution is a priori controlled.  In this case, Theorem~\ref{th:lwp} guarantees global solutions in $H^s$ for $s\geq 1$.

We describe the framework in a bit more detail.  In addition to the conservation law \eqref{102}, there is conservation of mass and Hamiltonian
\begin{equation}
\label{105}
\begin{array}{l} 
I[u(t)] =\int_\bt u^2(t,x)\, dx = I[f], \\[10pt]
H[u(t)] =\int_{\bt} \f{1}{2} |u_x|^2 + G(u(x))\, dx=H[f].
\end{array}
\end{equation}
where $G$ is the polynomial with $G'(z)=P(z), G(0)=0$.  Conservation of $H$ is especially important, as it sometimes allows for control of $\|u(t, \cdot)\|_{H^1}$ along the evolution. It is also well-known  that, due to the Gagliardo-Nirenberg-Sobolev inequality,  a priori control of $H^1$ norm is automatic for all $H^1$ initial data, if $deg(P)\leq 4$, or if $deg(P)=5$ and the data is small.  Even when $P$ contains higher-power nonlinearities, conservation of $H$ may still provide the $H^1$ bounds, provided  $G$ is of even power with a positive leading  coefficient. This leads to global well-posedness for these models. On the other hand, the equations \eqref{eq:gKdV} containing  higher power nonlinearities lack such control\footnote{specifically when  $G$ is of odd power or alternatively, $G$ is of even power, but with negative   leading order coefficient} and consequently, they may exhibit finite time blow-up \cite{blowup}. 
Here is our final main result, which provides polynomial-in-time bounds for global solutions of \eqref{eq:gKdV}. 
\begin{theorem}\label{th:main}
Let $s>1$ and assume an a priori control of $H^1$ norm of solutions for \eqref{eq:gKdV}. Then the global-in-time solution $u$ given in Theorem~\ref{th:lwp} satisfies the following polynomial-in-time bound:
\begin{equation}
\label{645}
\n{u(t)}{H^{s}} \leq C(\ve, P, \n{f}{H^s}) \lan{t}^{s-1+ \ve} \qquad \textnormal{ for any } \ve >0.
\end{equation}
\end{theorem}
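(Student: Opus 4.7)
The plan is to iterate Theorem~\ref{th:smoothing} on a sequence of intervals of fixed length, to track the evolution of the nonlinear remainder $N(t) := u(t) - e^{t\p_x^3} f$ at the \emph{enhanced} regularity $H^{s+\ga}$, and to convert the resulting discrete difference inequality into a sublinear-in-time bound for $\|u(t)\|_{H^s}$ by interpolating against the a~priori $H^1$ control. Using the reductions recorded before Theorem~\ref{th:lwp}, we may assume that $P$ contains no linear term and $\int_\bt f = 0$, so that the translation relating $u$ to $\wt u$ is trivial and $u=\wt u$. Because $\sup_{t}\|u(t)\|_{H^1}<\infty$ by hypothesis, Theorem~\ref{th:lwp} supplies a uniform step size $T = T(\sup_t \|u(t)\|_{H^1})>0$ identical at every restart, and the constant in Theorem~\ref{th:smoothing} depends only on $\|u(kT)\|_{H^{\min(s,1)}}=\|u(kT)\|_{H^1}$ when $s\geq 1$, hence is uniform in $k\in\bn$.

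Fix $\ga\in(0,1)$, to be sent to $1^{-}$ at the end. Applying Theorem~\ref{th:smoothing} at regularity~$s$ to the restarted datum $u(kT)$ (permissible since $\ga<\min(2s-1,1)=1$ for $s>1$) produces the decomposition $u((k+1)T)=e^{T\p_x^3}u(kT)+M_k$ with $\n{M_k}{H^{s+\ga}}\leq C_\ga \n{u(kT)}{H^s}$. Since $e^{T\p_x^3}$ is an $H^\si$-isometry and $N((k+1)T)=e^{T\p_x^3}N(kT)+M_k$, the quantity $D_k := \n{N(kT)}{H^{s+\ga}}$ satisfies the monotone recursion $D_{k+1}\leq D_k + C_\ga \n{u(kT)}{H^s}$ with $D_0=0$.

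To close the loop, I would bound $\n{u(kT)}{H^s}\leq \n{f}{H^s}+\n{N(kT)}{H^s}$ and interpolate $N(kT)$ between its uniformly bounded $H^1$ norm (controlled by the hypothesis since $\n{N(kT)}{H^1}\leq \n{u(kT)}{H^1}+\n{f}{H^1}$) and $D_k$, obtaining
\[
\n{N(kT)}{H^s}\leq \n{N(kT)}{H^1}^{\ga/(s+\ga-1)}\, D_k^{(s-1)/(s+\ga-1)}\lesssim D_k^\al,\qquad \al := \f{s-1}{s+\ga-1}<1.
\]
Substituting, $D_{k+1}\leq D_k + C(\n{f}{H^s} + D_k^\al)$, and comparison with the ODE $y'=Cy^\al$ gives $D_k\lesssim \lan{k}^{1/(1-\al)}=\lan{k}^{(s+\ga-1)/\ga}$. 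One final interpolation then yields $\n{u(kT)}{H^s}\lesssim \n{f}{H^s}+D_k^\al\lesssim \lan{k}^{(s-1)/\ga}$. Since $t=kT$ with $T$ fixed, and $\ga$ may be chosen arbitrarily close to $1$, this is exactly $\lan{t}^{s-1+\ve}$ for any $\ve>0$, which is~\eqref{645}.

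The principal point requiring care is the uniformity of the smoothing constant along the iterates: this reduces to the a~priori $H^1$ control and to the explicit dependence $C=C(\ga,P,\n{f}{H^{\min(s,1)}})$ in Theorem~\ref{th:smoothing}. The only other bookkeeping concern is the conservation of mean \eqref{102}, which guarantees that the reduction $u=\wt u$ persists at every restart, so the same smoothing statement can be re-invoked with the same constants at each step. Once these items are verified, the remainder is the short discrete Gronwall manipulation outlined above.
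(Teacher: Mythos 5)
Your proof is essentially correct and takes a genuinely different route from the paper's. The paper projects onto frequencies $|k|>n$, converts the smoothing gain into decay of the high-frequency tail via $\n{\bp_{>n}v_n(t)}{H^s}\lesssim n^{s-2+\ve}\n{v_n(t)}{H^{2-\ve}}$, sums these contributions along the iteration, and treats low frequencies trivially; for $s\geq 2$ it then runs an induction on $s$, because the $H^1\to H^{2-\ve}$ smoothing no longer controls $H^s$ after projection. You instead track the $H^{s+\ga}$ norm $D_k$ of the nonlinear remainder at time $kT$, bound $\n{u(kT)}{H^s}$ by interpolating that remainder between $H^1$ and $H^{s+\ga}$, and close the discrete Gronwall inequality $D_{k+1}\le D_k+C\bigl(D_k^\al+\n{f}{H^s}\bigr)$ with $\al=(s-1)/(s+\ga-1)$. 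This avoids both the frequency projections and the induction on $s$, and yields the exponent $(s-1)/\ga\to s-1$ as $\ga\to 1^-$, as required.

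One assertion, however, needs correcting: the reductions preceding Theorem~\ref{th:lwp} do \emph{not} give $u=\wt u$. The gauge \eqref{eq:u1} involves $\int_\bt u^{n-1}\,dx$, which does not vanish for $n\geq 3$ even after the mean-zero reduction (for mKdV, $n=3$, it equals the conserved $\n{f}{L^2}^2$), and is unrelated to removing a linear term from $P$. Thus applying the smoothing on $[kT,(k+1)T]$ to the restarted datum actually yields $u((k+1)T)=S_{\tau_k}e^{T\p_x^3}u(kT)+M_k$, where $S_a$ denotes the spatial shift $(S_a g)(x)=g(x-a)$ and $\tau_k$ the shift accumulated over that step; inductively, the rough part of $u(kT)$ is $S_{\Theta_k}e^{kT\p_x^3}f$ with $\Theta_k=\sum_{j<k}\tau_j$, not $e^{kT\p_x^3}f$. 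This is harmless to your scheme, since $S_a$ is an $H^\si$-isometry commuting with $e^{T\p_x^3}$ and with interpolation: replacing $N(kT)$ by $R_k:=u(kT)-S_{\Theta_k}e^{kT\p_x^3}f$, one still gets $R_{k+1}=S_{\tau_k}e^{T\p_x^3}R_k+M_k$, hence the identical recursion for $D_k:=\n{R_k}{H^{s+\ga}}$, and the interpolation uses only $\n{R_k}{H^1}\le\n{u(kT)}{H^1}+\n{f}{H^1}$, which remains uniformly bounded. With that bookkeeping fixed, the argument is sound.
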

Per the discussion preceding the statement of Theorem \ref{th:main}, we may state the following representative   corollary. 
\begin{corollary}
	\label{Cor:1} 
	Suppose the nonlinearity is given by $P(z)=\sum_{j=2}^{2N+1} a_j z^j$, where $a_{2N+1}>0$. Then, for $f\in H^s(\bt), s>1$, the unique global solution to \eqref{eq:gKdV} obeys the polynomial bound \eqref{645}. 
\end{corollary}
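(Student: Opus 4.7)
The plan is to reduce Corollary~\ref{Cor:1} to Theorem~\ref{th:main} by verifying that the sign hypothesis $a_{2N+1}>0$ forces an a priori bound on $\n{u(t)}{H^1}$ that is uniform in~$t$. Once such a bound is available, Theorem~\ref{th:main} immediately yields \eqref{645}; in addition, Theorem~\ref{th:lwp} guarantees that the solution is indeed global.

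To produce the $H^1$ bound, I would combine the two conservation laws in \eqref{105}. Since $G'=P$ and $G(0)=0$, we have $G(z)=\sum_{j=2}^{2N+1}\f{a_j}{j+1}z^{j+1}$, whose leading monomial $\f{a_{2N+1}}{2N+2}z^{2N+2}$ is an even power with positive coefficient. Rearranging $H[u(t)]=H[f]$ gives
$$\f{1}{2}\n{u_x(t)}{L^2}^2 + \f{a_{2N+1}}{2N+2}\n{u(t)}{L^{2N+2}}^{2N+2} = H[f] - \sum_{j=2}^{2N}\f{a_j}{j+1}\int_\bt u^{j+1}\,dx,$$
so that the left-hand side is a sum of two nonnegative quantities, and the $L^{2N+2}$-norm appears with a coercive sign.

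The next step is to absorb each of the subleading integrals on the right. By Gagliardo-Nirenberg interpolation between $L^2$ and $L^{2N+2}$, for each $j\in\{2,\ldots,2N\}$,
$$\abs{\int_\bt u^{j+1}\,dx}\lesssim \n{u}{L^2}^{\theta_j(j+1)}\n{u}{L^{2N+2}}^{(1-\theta_j)(j+1)},$$
where $(1-\theta_j)(j+1)<2N+2$ since $j+1<2N+2$. Young's inequality with a small parameter $\ve>0$, together with the mass conservation $\n{u(t)}{L^2}=\n{f}{L^2}$, bounds each such contribution by $\ve\n{u(t)}{L^{2N+2}}^{2N+2}+C(\ve,P,\n{f}{L^2})$. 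Fixing $\ve$ small and absorbing yields
$$\n{u_x(t)}{L^2}^2+\n{u(t)}{L^{2N+2}}^{2N+2}\le C(P,H[f],\n{f}{L^2})$$
uniformly in $t$. Combined with $L^2$ conservation, this gives the required bound $\n{u(t)}{H^1}\le C(\n{f}{H^1})$.

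There is no substantive obstacle. The only delicate point is the absorption step, which is a routine Young-type argument made possible precisely by the positivity of $a_{2N+1}$ and the evenness of $2N+2=\deg(G)$; under these assumptions, the coercive term dominates all lower-order $L^{j+1}$ contributions. With the uniform $H^1$ bound established, Theorem~\ref{th:main} supplies \eqref{645} and completes the proof.
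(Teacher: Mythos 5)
Your proposal is correct and follows exactly the route the paper has in mind: the paper states the corollary without a separate proof, referring back to the discussion before Theorem~\ref{th:main}, which notes that when $G$ has even degree with positive leading coefficient, conservation of the Hamiltonian \eqref{105} together with mass conservation controls $\n{u(t)}{H^1}$, after which Theorem~\ref{th:lwp} gives global existence and Theorem~\ref{th:main} gives \eqref{645}. One cosmetic remark: the interpolation you invoke is the elementary Lyapunov/H\"older interpolation of $L^p$ norms (no derivatives enter), not Gagliardo--Nirenberg proper, and an even quicker alternative is to observe that $G(z)\geq -C(P)$ pointwise since its leading term dominates for large $|z|$, so $\n{u_x(t)}{L^2}^2\leq 2H[f]+2C(P)|\bt|$ directly.
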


{\bf Remarks:} 
\begin{itemize}
\item  For KdV, mKdV and Gardner models, uniform-in-time bounds are available \cite{KT}.  

\item In \cite{St2}, a polynomial bound with the same exponent as Theorem~\ref{th:main} is shown for non-integrable perturbation of KdV and mKdV on the torus.  For gKdV with monomial nonlinearity $P(u) = u^{k+1}$, a polynomial bound is derived \cite{St3} at rate $\lan{t}^{2s}$ for $s\geq 1$, assuming a priori control of $H^1$ norm.  Theorem~\ref{th:main} improves the exponent for gKdV bounds to the level equivalent to perturbed KdV and mKdV models given in \cite{St2}.

\item Our scheme suggests that statement of Theorem~\ref{th:main} may extend to the class of nonlinearities given by $C^\infty$ functions $P$, which are analytic at zero.  We do not pursue this herein.
\end{itemize}

We now outline the plan for the paper. For simplicity of our subsequent discussion, we work with  polynomials containing only  two terms: namely $P(u) = a u^n + b u ^m$.  It will be apparent that our scheme  easily extends to  general polynomial models. In Section~\ref{sec:not}, we introduce basic notations and functions spaces, as well as linear estimates from literature. In Section~\ref{sec:mult}, we present one of the main technical tools, namely the multi-linear  estimates, which allow for the smoothing estimates later on. In Section \ref{Sec:3}, we provide the proof of Theorem \ref{th:lwp}.  In Section \ref{sec:4}, we perform  normal form transformation of the equation and  prove consequent smoothing estimates. In Section~\ref{sec:5}, we prove the nonlinear smoothing property given in Theorem~\ref{th:smoothing}. Finally, in Section~\ref{sec:6}, we introduce a new technique for deriving polynomial bounds from nonlinear smoothing estimates. We believe that this simple and efficient method is new and it is likely to be useful in other situations as well.

\section{Functional spaces and Equations set-up}
\label{sec:not}

For any function $f(x)$, denote the $k$~th Fourier coefficient by $f_k$ for $k\in \bz$.  If the function has the mean-zero property, then we assume that $k\neq 0$.  When as sum over such index is written, it is assumed that the summation takes place over $k\in \bz^* :=\bz\setminus\{0\}$ rather than over $\bz$.  Also, for a function of $t$, denote the Fourier transform of $g(t)$ by $\wh{g}(\tau)$.

We denote $\lan{\cdot} = (1+ \cdot)^{\f{1}{2}}$.

For any two quantities $A$ and $B$, we write $A\lesssim B$ (similarly $A\gtrsim B$) if there is an absolute positive constant $c$ satisfying $|A| \leq c |B|$ (similarly $|A| \geq c |B|$).  Negation of $A\lesssim B$ is denoted $A \gg B$ (similarly, negation of $A\gtrsim B$ is $A\ll B$).  Concurrence of $A\lesssim B$ and $A \gtrsim B$ is denoted $A\sim B$.
\subsection{Functions spaces} Let $T>0$.  For any functional space $Y\hookrightarrow C_t^0$ involving time variable $t$, define the norm $Y_T$ by  
\[
\n{u}{Y_T} = \inf\{\n{v}{Y}: v\in Y \tn{ and } v(t) = u(t) \tn{ for all } 0\leq t \leq T\}.
\]
Next, we introduce Bourgain space $Y^s := X^{s,\f{1}{2}} \cap H^s_x L^1_{\tau}$ defined in \cite{I1} which embeds in $C^0_t H^s_x$.  This norm is defined by
\[
\n{u}{Y^s} = \n{\lan{\tau - k^3}^{\f{1}{2}} \lan{k}^s \wh{u_k}(\tau)}{L^2_\tau l^2_k} +\n{\lan{k}^s \wh{u_k}(\tau)}{ l^2_k L^1_{\tau}}.
\]
Because $X^{s,\f{1}{2}}$ fails to embed in $C^0_t H^s_x$ and also $X^{s,b}$ bilinear estimate fails for any $b \neq \f{1}{2}$ \cite{KPV}, it is necessary to intersect it with $H^s_x L^1_\tau$.  As give in \cite{I1}, the space $Y^s$ is accompanied by the space $Z^s$ whose norm is defined by 
\[
\n{u}{Z^s} =  \n{\lan{\tau - k^3}^{-\f{1}{2}} \lan{k}^s \wh{u_k}(\tau)}{L^2_\tau l^2_k} +\n{\f{\lan{k}^s \wh{u_k}(\tau)}{ \lan{\tau - k^3}  }}{ l^2_k L^1_{\tau}}.
\]
Then, we state the following results:

\begin{proposition}\cite{I1} \label{pro:iteam}
For any $\eta \in \mathcal{S}_t$ and $f \in C^{\infty}_x(\bt)$, $F \in Z^s$, 
\begin{align*}
\n{ \eta(t) e^{t\p_x^3} f}{Y^s} &\lesssim \n{f}{H^s}\\
\n{\eta(t) \int_0^t e^{(t-s) \p_x^3} F(s)\, ds}{Y^s}  &\lesssim \n{F}{Z^s}.
\end{align*}

\end{proposition}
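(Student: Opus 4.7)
The plan is to establish the two bounds separately, both via direct spacetime Fourier analysis exploiting the geometry of the free evolution. For the homogeneous estimate, I would compute the spacetime Fourier coefficient of $\eta(t)e^{t\p_x^3}f$, which equals $\wh{\eta}(\tau-k^3)\,\wh{f}_k$ because $e^{t\p_x^3}f$ has spatial Fourier coefficient $e^{itk^3}\wh{f}_k$. Both components of $\n{\cdot}{Y^s}$ then factor cleanly: after the change of variable $\sigma=\tau-k^3$, the $X^{s,1/2}$ piece becomes $\n{\lan{\sigma}^{1/2}\wh{\eta}(\sigma)}{L^2_\sigma}\,\n{f}{H^s}$ and the $H^s_x L^1_\tau$ piece becomes $\n{\wh{\eta}}{L^1}\,\n{f}{H^s}$. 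Both prefactors are finite because $\eta\in\cs_t$.

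For the Duhamel estimate the starting identity is
\[
\Big[\eta(t)\int_0^t e^{(t-s)\p_x^3}F(s)\,ds\Big]_k = \eta(t)\int_\br \wt{F}_k(\tau)\,\frac{e^{it\tau}-e^{itk^3}}{i(\tau-k^3)}\,d\tau,
\]
where $\wt{F}_k(\tau)$ is the spacetime Fourier coefficient of $F$. I would introduce a smooth cutoff $\psi$ supported in $\{|\tau-k^3|\leq 1\}$ and split into resonant and non-resonant pieces. On the non-resonant region $|\tau-k^3|\gtrsim 1$ the quotient is bounded and the two exponentials can be separated: the $e^{it\tau}$ term has spacetime Fourier data essentially $\wt{F}_k(\tau)/(\tau-k^3)$ convolved in $\tau$ with $\wh{\eta}$, and its $Y^s$ norm is controlled by the two pieces of the $Z^s$ norm directly; the $e^{itk^3}$ term is a free solution whose data has $H^s$ norm controlled by the $\ell^2_k L^1_\tau$ component of $\n{F}{Z^s}$, to which the homogeneous estimate just established applies.

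On the resonant region $|\tau-k^3|\leq 1$, the $1/(\tau-k^3)$ factor is singular and must be removed. I would Taylor expand $e^{it(\tau-k^3)}=\sum_{n\geq 0}(it(\tau-k^3))^n/n!$, factor out $e^{itk^3}$, and rewrite the contribution as
\[
\eta(t)\sum_{n\geq 1}\f{(it)^n}{n!}\,e^{itk^3}\,g_n(k),\qquad g_n(k):=\int_\br \wt{F}_k(\tau)(\tau-k^3)^{n-1}\psi(\tau-k^3)\,d\tau.
\]
Each summand is a free solution with data $g_n$ and time-cutoff $t^n\eta(t)\in\cs_t$, so the homogeneous estimate applies term by term; Cauchy-Schwarz in $\tau$ (using $|\tau-k^3|\leq 1$ on $\textnormal{supp}\,\psi$) bounds $\n{g_n}{H^s}$ by the $\ell^2_k L^1_\tau$ component of $\n{F}{Z^s}$, and the factorial yields summability in $n$. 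The main obstacle is accommodating the $\ell^2_k L^1_\tau$ piece of the $Y^s$ norm: a pure $X^{s,1/2}$-to-$X^{s,-1/2}$ duality would deliver only the $L^2_\tau\ell^2_k$ part, and it is precisely to close this gap that $Z^s$ is augmented with the $L^1_\tau/\lan{\tau-k^3}$ component. The delicate bookkeeping lies in checking that the resonant/non-resonant splitting above respects both components of $\n{\cdot}{Y^s}$ and $\n{\cdot}{Z^s}$ simultaneously.
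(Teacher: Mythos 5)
The paper does not give a proof of this proposition; it is quoted from \cite{I1}, where it appears (as Lemma 3.2 and the surrounding discussion) with precisely the argument you sketch. Your proof is correct and matches that standard Bourgain-space argument: the homogeneous estimate by direct spacetime Fourier computation, and the Duhamel estimate by the $\frac{e^{it\tau}-e^{itk^3}}{i(\tau-k^3)}$ identity followed by a resonant/nonresonant split with a Taylor expansion near $\tau=k^3$ and with the $H^s_xL^1_\tau$ component of $Z^s$ absorbing the boundary ``free solution'' pieces.

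Two steps you state without proof deserve to be made explicit, though both are routine. First, the ``factorial yields summability'' claim for the resonant piece requires that the implicit constants in the homogeneous estimate applied with cutoff $t^n\eta$, namely $\|\lan{\sigma}^{1/2}\wh{t^n\eta}\|_{L^2_\sigma}$ and $\|\wh{t^n\eta}\|_{L^1_\sigma}$, grow at most like $C^n$; this is clear when $\eta$ is smooth and compactly supported (which is the case actually used in \cite{I1}), since then $\|t^n\eta\|_{C^1}\lesssim C^n$, but for a general Schwartz $\eta$ one should check it. Second, for the nonresonant $e^{it\tau}$ term you need that pointwise multiplication by $\eta(t)$, i.e.\ convolution in $\tau$ by $\wh{\eta}$, is bounded on both components of $Y^s$; this follows from Peetre's inequality $\lan{\tau-k^3}^{1/2}\lesssim\lan{\tau-\tau'}^{1/2}\lan{\tau'-k^3}^{1/2}$ and Young's inequality, but it is a separate (standard) lemma that should be invoked rather than left implicit.
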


Following linear estimates are known from \cite{Bour}:
\begin{align*}
\n{\eta(t) v}{L^2_t L^2_{x}} &= \n{v}{X^{0,0}},
\n{\eta(t) v}{L^\infty_t L^2_{x}} \lesssim_\de \n{v}{X^{0,1/2 +\de}},\\
\n{\eta(t) v}{L^4_{x,t}} &\lesssim \n{v}{X^{0,1/3}},
\n{\eta(t) v}{L^6_{x,t}} \lesssim_{\ve, \de} \n{v}{X^{\ve, \f{1}{2}+\de}},\\
\n{\eta(t/T) v}{X^{s,b}} &\lesssim T^{b'-b} \n{v}{X^{s,b'}} \qquad \textnormal{ for } -\f{1}{2}<b<b'<\f{1}{2}.
\end{align*}
Interpolation yields  that, given any $2<q<6$, there exist some  $\ve>0$ satisfying
 \[
\n{\eta(t) v}{L^{q}_{x,t}} \lesssim_{\ve} \n{v}{X^{\ve, \f{1}{2}-\ve}}.
\]

Following linear estimates are from \cite[Lemma 2.2]{Bao}
\begin{align*}
\n{\eta(t) v}{L^\infty_{t,x}} &\lesssim_\ve \n{v}{Y^{\f{1}{2}+\ve}},\\
\n{\eta(t) v}{L^6_{t,x}} &\lesssim_\ve \n{v}{Y^{\ve}}.
\end{align*}

\subsection{Equation set-up}
Consider the nonlinearity in \eqref{eq:gKdV}, which we assumed that it consists of two terms only.   We can decompose the $k$~th Fourier coefficient of $\p_x (u^n + u^m)$ as
\[
a \sum_{\tiny k_1 + \cdots + k_n = k} i k \prod_{j=1}^n u_{k_j} +b \sum_{\tiny k'_1 + \cdots + k'_m = k} i k \prod_{j'=1}^m u_{k_{j'}}  
\]
Resonance in gKdV occurs when one of the interior frequency~$k_j$ equals the exterior frequency~$k$.  Note that a quadratic nonlinearity does not contain any resonance due to the mean-zero restriction.  The RHS can be written as $\rr[u]+\nr[u]$ where
\[
\rr[u] := a \sum_{\tiny\begin{array}{c} k_1 + \cdots + k_n = k\\ k_{j_0} = k \textnormal{ for some } j_0 \end{array}} i k \prod_{j=1}^n u_{k_j}+ b  \sum_{\tiny\begin{array}{c} k'_1 + \cdots + k'_m = k\\ k_{j'_0} = k \textnormal{ for some } j'_0 \end{array} }i k \prod_{j'=1}^m u_{k'_j}.
\] 
Here $\rr$ stands for Resonant term and $\nr$ stands for Non-Resonant term.
Let us carefully examine the structure of $\rr[u]$.  We will focus on the first term corresponding to $a \p_x(u^n)$ since the second term can be managed analogously.  The set over which this summation occurs is 
\[
R_k = \{(k_1, \cdots, k_n)\in (\bz^*)^n: k_1 +\cdots+ k_n = k, \, k_{j_0} = k \tn{ for some } 1\leq j_0\leq n\}.
\]
Define for $1 \leq l \leq n$,
\[
R_k^l = \{(k_1, \cdots, k_n)\in (\bz^*)^n: k_1 +\cdots+ k_{l-1} + k_{l+1}+ \cdots+ k_n = 0, \, k_{l} = k\}.
\]
Clearly, $R_k = \ds \cup_{l=1}^n R_k^l$ but there are a lot of repeated elements in this union.  We will need to keep track of these repeated elements later.  First, we note that 
\[
\sum_{R_k^l} i k \prod_{j=1}^n u_{k_j}  =ik u_k \sum_{\tiny k_1 +\cdots+ k_{l-1} + k_{l+1}+ \cdots+ k_n = 0 } \prod_{j\neq l} u_{k_j}  =   ik u_k \int_\bt u^{n-1}(t,x)\,dx 
\]
which is independent of $l$.  Finally, observe that
\[
\sum_{R_k} = \sum_{l=1}^n \sum_{R_k^l} - \sum_{l_1, l_2} \sum_{R_k^{l_1}\cap R_{k}^{l_2}} + \sum_{l_1, l_2, l_3} \sum_{R_k^{l_1} \cap R_k^{l_2} \cap R_k^{l_3}} - \cdots.
\]
We will show that (1) the first summation can be eliminated by a well-behaved transformation and (2) the remaining summation already has enough smoothness already built in.  

We write $\rr[u] = \rr^1[u] + \rr^2[u]$ where $\rr^1[u]$ contains only the first summation.  That is,
\[
\rr^1[u]_k =  ik u_k \left(a n \int_\bt u^{n-1}(t,x)\,dx+ bm  \int_\bt u^{m-1}(t,x)\,dx\right).
\]
Consider a transformation defined by 
\begin{equation}\label{eq:u1}
\wt{u}_k(t) := u_k (t) \exp\pr{ik \int_0^t \int_\bt a n u^{n-1}  (t',x) + b m u^{m-1}  (t',x)\, dx \, dt'}.
\end{equation}
Note that $u$ and $\wt{u}$ has the same initial value, $\n{u}{H^s_x} = \n{\wt{u}}{H^s_x}$ and also \\ $\int_\bt u^{p} (t,x)\, dx = \int_{\bt} \wt{u}^{p} (t,x)\, dx$ for any $p\in \bn$.  If $u$ solves \eqref{eq:gKdV}, then $\wt{u}$ solves 
\begin{equation}\label{eq:util}
\left|\begin{array}{l}(\p_t+\p_{xxx}) \wt{u} = \rr^2[\wt{u}] + \nr[\wt{u}], \\ \wt{u}\vert_{t=0} = f \in H^s(\bt).\end{array}\right.
\end{equation}
Conversely, if $\wt{u}$ solves above, then
\begin{equation}\label{eq:u2}
u(t,x) = \wt{u} \pr{t, x+ \int_0^t \int_\bt an \wt{u}^{n-1} (t',x) + bm  \wt{u}^{m-1} (t',x)\, dx\, dt'}
\end{equation}
satisfies \eqref{eq:gKdV}. Thus, we will focus on solving \eqref{eq:util} from here on.  For simplicity of notation, we will still use $u$ instead of $\wt{u}$ in the sequel.
\section{Symmetric multi-linear estimates}\label{sec:mult}

In order to establish multi-linear dipersive estimates, we first consider nonlinear dispersive interactions:  For any $n\in \bn$, denote
\[
H_n := H_n (k_1, \cdots, k_n) =  \left( \sum_{j=1}^n k_j\right)^3 - \sum_{j=1}^n k_j^3.
\]
We need a result which exploits the dispersion relation to its fullest.  We will however convince ourselves  that this  alone will not give the desired smoothing estimates, without the help of a normal form transformation. Nevertheless, this will reduce matters to some very specific terms, which will be eliminated via the said normal forms. 
\subsection{Analysis of the dispersion relation}
We have the following Proposition. 
\begin{proposition}
	\label{pro:disp}  
Consider $k= \sum_{j=1}^n k_j$ for $k_j \in \bz^*$ and $H_n$ as defined above.  Denote $k_{\max} := \max\{|k_1|,\cdots, |k_n|\}$ and $k_{\max_{j}}$ be the $j^{\textnormal{th}}$ largest term in $\{|k_1|, \cdots, |k_n|\}$.
\begin{enumerate}
\item Let $n = 2$.  Then A: $H_2 \gtrsim k_{\max}^2$.
\item Let $n = 3$.   At least one of the following is true:
\begin{enumerate}[A.]
\item $H_3 \gtrsim k_{\max}^2$.
\item $k_{j_0} = k$ for some $j_0 \in \{1,2,3\}$.
\item $k_j \gtrsim k$ for all $j\in \{1,2,3\}$.
\end{enumerate}
\item Let $n \geq 4$.  At least one of the following is true:
\begin{enumerate}[A.]
\item $H_n \gtrsim k_{\max}^2$.
\item $k = k_{j_0}$ for some $j_0\in \{1,\cdots, n\}$. (resonance)
\item $k_{\max_3} \gtrsim k$.
\item $k_{\max_3}^2 k_{\max_4} \gtrsim k_{\max}^2$.
\end{enumerate}
\end{enumerate}
\end{proposition}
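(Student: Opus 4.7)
I would handle the three cases separately, in each case exposing a factorization of $H_n$ that makes its zeros transparent; the subsequent work is a short case analysis on relative sizes of the $|k_j|$.

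For $n=2$, direct expansion gives $H_2 = 3k_1 k_2 (k_1+k_2) = 3 k_1 k_2 k$. Since $k,k_1,k_2 \in \bz^*$ each have magnitude at least one, the only delicate point is when one of $|k_1|,|k_2|$ is much smaller than the other, say $|k_2| \ll |k_1| = k_{\max}$. Then $|k_1+k_2| \geq k_{\max}/2$, so $|H_2| \geq 3\cdot k_{\max} \cdot 1 \cdot (k_{\max}/2) \gtrsim k_{\max}^2$; in the complementary subcase $|k_2| \gtrsim k_{\max}$ we have $|k_1 k_2| \gtrsim k_{\max}^2$ and $|k| \geq 1$, giving the same bound. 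This establishes (A) unconditionally.

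For $n=3$, I would invoke the symmetric identity $(a+b+c)^3 - a^3 - b^3 - c^3 = 3(a+b)(b+c)(c+a)$, which after substituting $a+b = k - c$ etc.\ produces
\[
H_3 = 3(k-k_1)(k-k_2)(k-k_3).
\]
Assume (B) and (C) both fail: no $k_j$ equals $k$, so each $|k-k_j| \geq 1$, and some $|k_{j_0}| \leq \ve |k|$ for a small absolute $\ve$; WLOG $j_0 = 3$, so $|k - k_3| \geq (1-\ve)|k|$. A short split on whether $|k_2|$ is comparable to $k_{\max} = |k_1|$ (in which case $|k-k_1|=|k_2+k_3|$ is either $\sim k_{\max}$ or zero, the latter contradicting (B)) or $|k_2| \ll k_{\max}$ (in which case $k_{\max} \sim |k|$ and the two remaining factors are at least $1$ while the third is $\sim k_{\max}$) yields $|H_3| \gtrsim k_{\max}^2$, i.e.\ (A).

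For $n \geq 4$, order $|k_1| \geq |k_2| \geq \cdots \geq |k_n|$, and derive the identity
\[
H_n = 3(k - A)(k - k_1)(k - k_2) + (A^3 - B), \qquad A = \sum_{j \geq 3} k_j, \quad B = \sum_{j \geq 3} k_j^3,
\]
which reduces to the $n=3$ formula when $n=3$ since then $A^3 = B$. Writing $A = k_3 + A'$, $B = k_3^3 + B'$ with $A' = k_4 + \cdots + k_n$, telescoping gives
\[
A^3 - B \;=\; 3 k_3^2 A' + 3 k_3 (A')^2 + (A')^3 - B', \qquad \text{hence} \qquad |A^3 - B| \lesssim k_{\max_3}^2\, k_{\max_4}.
\]
Now assume (A), (B), (C), (D) all fail and derive a contradiction. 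Failure of (C) forces $|k_l| \ll |k|$ for all $l \geq 3$, so $|A| \ll |k|$, $|k-A| \sim |k|$, and consequently $|k_1+k_2| \gtrsim |k|$, giving $k_{\max} = |k_1| \gtrsim |k|$. Failure of (D) makes the error $|A^3 - B| \ll k_{\max}^2$, so (A) failing means the main term is also $\ll k_{\max}^2$. Using $|k-k_1| = |k_2 + A|$ and $|k-k_2| = |k_1 + A|$ and splitting on whether $|k_2| \sim |k_1|$ or $|k_2| \ll |k_1|$ (the latter forcing $k_{\max} \sim |k|$), together with (B) giving each $|k - k_j| \geq 1$, shows the main term is in fact $\gtrsim k_{\max}^2$, contradicting (A) failing.

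The main obstacle is the case $n \geq 4$: one must locate the exact cancellation in $A^3 - B$ that makes the error $\lesssim k_{\max_3}^2 k_{\max_4}$ rather than the naive $k_{\max_3}^3$, and this quantitative cancellation is precisely what is codified as Case (D). Once the identity and the refined error bound are in hand, the remaining work is careful bookkeeping of the factors $|k-A|, |k-k_1|, |k-k_2|$ against the error, using (B) to prevent any single factor from collapsing below $1$.
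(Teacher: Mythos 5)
Your argument is correct, and the conclusions you reach at each stage are algebraically the same as the paper's, though you reach the $n\ge 4$ decomposition by a slightly different route. The paper telescopes $H_n = 3\sum_{j=1}^{n-1} k_j \widetilde{k_j}\widetilde{k_{j+1}}$ with $\widetilde{k_j}=k_j+\cdots+k_n$, then combines the first two summands into the single product $\widetilde{k_2}(\widetilde{k_3}+k_1)(k_1+k_2)$ and bounds the tail $3\sum_{j\ge 3} k_j\widetilde{k_j}\widetilde{k_{j+1}}$ by $k_{\max_3}^2 k_{\max_4}$ using $|\widetilde{k_j}|\lesssim|k_j|$. You instead block $A=\sum_{j\ge 3} k_j$, apply the ternary identity to $(k_1,k_2,A)$ to produce the main term $3(k-A)(k-k_1)(k-k_2)$ — which is exactly $3\widetilde{k_2}(\widetilde{k_3}+k_1)(k_1+k_2)$ — and then estimate the error $A^3-B$ by expanding around $k_3$; the two error expressions are the same quantity written differently, and both give $\lesssim k_{\max_3}^2 k_{\max_4}$, which is what makes Case~D the right threshold. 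Your block-and-reduce-to-$n=3$ viewpoint is arguably the cleaner way to explain where Case~D comes from, while the paper's telescoping identity is more readily reusable elsewhere (it is invoked again when bounding $\sigma-\mu$ in the normal-form cancellation lemma). The subsequent case analysis on $|k_2|\sim|k_1|$ versus $|k_2|\ll|k_1|$ is the same in both, and your use of the failure of~(B) to keep each linear factor $\ge 1$ matches the paper's reasoning.
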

\begin{proof}
For $H_2$, the proof is immediate when since $H_n = 3 k k_1 k_2$ where none of the factors can vanish.  At least two of the frequencies should be comparable to $k_{\max}$, giving the desired result.

For $n\geq 3$, we will derive a contradiction after negating all conditions listed above.  For $n=3$, we assume by contrary that
\begin{enumerate}
\item $H_3 \ll k_{1}^2$.
\item $k \neq k_{j}$ for any $j\in \{1,2,3\}$.
\item $k_{3} \ll k$.
\end{enumerate}
Note that $H_3 = 3 (k_1 + k_2)(k_2+ k_3)(k_3+ k_1)$.  None of the factors can vanish due to the assumption (2).  Our assumption $k_3 \ll k \lesssim k_1$ implies $k_3 + k_1 \sim k_1$.  Also, if $k_1 \sim k_2$, then the second factor $k_2 + k_3 \sim k_1$; and if $k_1 \gg k_2$, then the first term $k_1 + k_2 \sim k_1$.  In either cases, we get $H_3 \gtrsim k_1^2$ which contradicts with our assumption.  This proves our estimate for $H_3$.

For $n\geq 4$, we define the following notation: 
\[
\wt{k_j} :=  k_j + \cdots + k_n \qquad \tn{where }|k_1| \geq |k_2| \geq \cdots \geq |k_n|
\]
where we have assumed a descending order of frequency indices without loss of generality. we assume that
\begin{enumerate}
\item $H_n \ll k_{1}^2$.
\item $k \neq k_{j}$ for any $j\in \{1,\cdots, n\}$.
\item $k_{3} \ll k$.
\item $k_{3}^2 k_{4} \ll k_{1}^2$.
\end{enumerate}
We begin by observing the following identity:
\begin{align}
H_n &= \wt{k_1}^3 - k_1^3 - \cdots - k_n^3 \notag\\
	&= (\wt{k_1}^3 - k_1^3 - \wt{k_2}^3) + (\wt{k_2}^3 - k_2^3 - \wt{k_3}^3)+ \cdots + (\wt{k_{n-1}}^3 - k_{n-1}^3 - \wt{k_n}^3)\notag\\
	&= 3 k_1 \wt{k_1} \wt{k_2}+ 3 k_2 \wt{k_2} \wt{k_3} + \cdots +  3 k_{n-1} \wt{k_{n-1}} \wt{k_n}\label{eq:hn}\\
	&= 3 \sum_{j=1}^{n-1}  k _j\wt{k_j} \wt{k_{j+1}}. \notag
\end{align}
Consider the first two terms of $H_n$ from above:
\[
k_1\wt{k_2} \wt{k_1} +  k_2 \wt{k_2}  \wt{k_{3}} = \wt{k_2}(\wt{k_3} + k_1)(k_1 + k_2).
\]  
Assumption $k_3 \ll k$ implies that $k_1 + k_2 \neq 0$.   Also, since $\wt{k_3} \lesssim k_3 \ll k \lesssim k_1$, we have that $k_1 + \wt{k_3} \neq 0$.   Finally, we are given by assumption that $\wt{k_2}\neq 0$.  Thus, the expression above does not vanish.

Since $\wt{k_3} \ll k_1$, the middle factor $\wt{k_3} + k_1 \sim k_1$.  We claim that the remaining term $\wt{k_2} (k_1 + k_2) = (k_2 + \wt{k_3})(k_1+ k_2) \gtrsim k_1$.  To see this, we need to split into two cases: $k_1 \sim k_2$ or $k_1 \gg k_2$.

In the first case $k_1 \sim k_2$, the first factor $k_2 + \wt{k_3}\sim k_1$ since by assumption $\wt{k_3} \ll k_1 \sim k_2$.  In the second case $k_1 \gg k_2$, the last factor $k_1 + k_2 \sim k_1$.  This shows that, in either cases, $ (k_2 + \wt{k_3})(k_1+ k_2) \gtrsim k_1$.

So far, we have shown that the first two terms of $H_n$ is at least the size of $k_1^2$.  Since we need to have $H_n \ll k_1^2$, we need the remaining terms of $H_n$ to be comparable to $k_1^2$ in order to cancel out the first two terms.  Namely we need
\[
k_3 \wt{k_3} \wt{k_4} + \cdots + k_{n-1} \wt{k_{n-1}} \wt{k_n} \gtrsim k_1^2.
\]
But note $\wt{k_j} \lesssim k_j$ for any $j$, so we have 
\[
k_1^2 \lesssim k_3 \wt{k_3} \wt{k_4} + \cdots + k_{n-1} \wt{k_{n-1}} \wt{k_n} \lesssim k_3^2 |k_4| + \cdots + k_{n-1}^2 |k_n| \lesssim k_3^2 k_4.
\]
But our assumption states $k_3^2 k_4 \ll k_1^2$, which contradicts with above.  This proves the desired result.
\end{proof}
{\bf Remark:} 
Proposition~\ref{pro:disp} above is used to establish our heuristics.  It would be helpful to establish a strategy at this point using a rough derivative count.  Our goal is to prove an estimate of the form:
\[
\n{\p_x (u^n)}{Z^{s+\ga}} \lesssim \n{u}{Y^s}^n.
\]
This means that we need to fight $1+s+ \ga$~derivatives with $|H_n|^{1/2}$ as well as $\prod_{j=1}^n |k_j|^s$.  Case B is resonance, which is mostly taken care of via direct transformation.  We still need to deal with $\rr^2$, but at least three internal frequencies are comparable to $k$ in $\rr^2$.  This places $\rr^2$ in Case C.

In Case C, we have that $(k_{\max_1} k_{\max_2} k_{\max_3})^s \gtrsim |k|^{3s}$, which gives us $3s$ derivatives without using any of the dispersive gain~$H_n$.  So we need $1+s+\ga < 3s \iff \ga < 2s -1$.

In Case D, $(k_{\max_1} k_{\max_2} k_{\max_3} k_{\max_4})^s \gtrsim k_{\max_1}^s  (k_{\max_3}^2 k_{\max_4})^s  \gtrsim k_{\max}^{3s}$.   This leads to the same restriction as above.

In Case A, a normal form must be used because the gain of $H_n^{1/2} \gtrsim k_{\max}^1$ is insufficient for any $\ga >0$.  But taking a normal form for this term means that we gain the full $H_n$ derivative.  The remainder terms will now contain an extra derivative, which means that we now need to fight $2+s +\gamma$ derivatives.  Establishing remainder estimates for normal form will be very delicate and this is as far as heuristics can take us.

We now need a technical tool to estimate various operator norms of multi-linear operators, based on size estimates of the multpliers. 

\subsection{Size estimates for multi-linear operators}
\begin{definition}
Given a symbol $\si  = \si(k_1, \cdots, k_n)$ with $n\geq 2$, a multi-linear Fourier multiplier $T^n_\si$ is defined via 
\begin{equation}\label{eq:tsi}
T^{n}_\si (u_1, \cdots, u_n) :=\sum_{k\in \bz^*} \sum_{\ (k_1, \cdots, k_n) \in \Om_k } \si(k_1, \cdots, k_n) \prod_{j=1}^n (u_j)_{k_j} e^{i kx}  
\end{equation}
where $\Om_k$ places a restriction in frequency interactions.  This domain is associated with the operator $T^n_\si$.
\end{definition}

Following is one of the main tools in achieving our estimates.  It deals with symmetric estimates where all inputs of $T^n_\si$ are identical.

\begin{proposition}\label{pro:main}
Let $n\geq 2$ and $0<T \ll 1$.  Consider estimates of the following type:  Given a multi-linear operator $T^n_\si$ as defined in \eqref{eq:tsi}
\[
\n{T^n_\si (u, \cdots, u)}{Z^{s_0}} \lesssim_{\ve, n} T^{\ve} \n{u}{Y^{s_1}}\n{u}{Y^{s_2}}^{n-1}
\]
for some $s_0,s_1, s_2 > \f{1}{2}$ where $u := u(t) = \eta (t/T) u(t)$ for a smooth cut-off function $\eta$.  The inequality above is satisfied if either of the following conditions is met.
\begin{itemize}
\item If dispersion weight $H_n$ is used, then we need
\begin{equation}\label{eq:prop1}
\sup_{(k_1, \cdots, k_n) \in \Om_k} \f{ |k|^{s_0} |\si|(k_1,\cdots, k_n)}{ \lan{H_n}^{\f{1}{2}} k_{\max_1}^{s_1} k_{\max_2}^{s_2}}  = O(1)
\end{equation}
where $k_{\max_j}$ is as defined in Proposition~\ref{pro:disp}. \\

\item If dispersion weight $H_n$ is not used, then we need
\begin{equation}\label{eq:prop2}
\sup_{(k_1, \cdots, k_n) \in \Om_k} \f{ |k|^{s_0} |\si|(k_1,\cdots, k_n)}{ k_{\max_1}^{s_1-\ve} \left(k_{\max_2} k_{\max_3} k_{\max_4}\right)^{s_{2}}}  = O(1).
\end{equation}
We use the convention that $k_{\max_j} = 1$ for $j>n$.
\end{itemize}

\end{proposition}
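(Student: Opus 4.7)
The plan is to dualize each piece of the $Z^{s_0}$ norm and reduce the statement to estimating a single $(n+1)$-linear form on the Fourier side. The first piece of $Z^{s_0}$ (the $X^{s_0,-1/2}$ part) pairs against $L^2_{\tau,k}$, while the second piece pairs against $l^2_k L^\infty_\tau$ with the weight $\lan{\tau-k^3}^{-1}$ already built in. After this reduction, the estimate becomes
\[
\abs{\sum_{k\in\bz^*}\sum_{(k_j)\in\Om_k}\int \si(k_1,\ldots,k_n) \prod_{j=1}^n \wh{u}_{k_j}(\tau_j)\, g(\tau,k)\, d\tau_1\cdots d\tau_{n-1}\,d\tau}
\lesssim T^\ve \n{u}{Y^{s_1}}\n{u}{Y^{s_2}}^{n-1},
\]
uniformly for $g$ in the unit ball of the dual space and constrained to $\tau=\sum\tau_j$. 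Since all inputs coincide I may relabel so that $|k_1|\geq|k_2|\geq\cdots\geq|k_n|$ and identify $k_{\max_j}$ with $|k_j|$.

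The heart of the proof is the arithmetic identity
\[
(\tau-k^3)-\sum_{j=1}^n(\tau_j-k_j^3)=-H_n(k_1,\ldots,k_n),
\]
which forces $\max\{\lan{\tau-k^3},\lan{\tau_1-k_1^3},\ldots,\lan{\tau_n-k_n^3}\}\gtrsim\lan{H_n}$. For the branch using \eqref{eq:prop1}, I multiply and divide by $\lan{H_n}^{1/2}$, apply the pointwise bound on $\si$, and split into the $n+1$ subcases according to which modulation realizes this maximum. In each subcase $\lan{H_n}^{1/2}$ is absorbed into the appropriate $\lan{\cdot}^{1/2}$-weight---either that of $g$ (when $\lan{\tau-k^3}$ dominates) or that of some input $u_{j_0}$ (when $\lan{\tau_{j_0}-k_{j_0}^3}$ dominates)---leaving a form whose multiplier is controlled by $k_{\max_1}^{s_1}k_{\max_2}^{s_2}$. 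For the branch \eqref{eq:prop2} no dispersion is invoked, and the derivatives $k_{\max_1}^{s_1-\ve}k_{\max_2}^{s_2}k_{\max_3}^{s_2}k_{\max_4}^{s_2}$ are simply distributed across the four highest-frequency inputs (with $k_{\max_j}=1$ for $j>n$ by convention).

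In both branches I close by H\"older in $(t,x)$: the ``$L^2$-type'' factor that still carries a $\lan{\cdot}^{1/2}$-weight is placed in $L^2_{t,x}$, the factor carrying $s_1$ derivatives in $L^2_{t,x}$ or $L^6_{t,x}$, and every remaining factor---each with at most $s_2$ derivatives and $s_2>\f{1}{2}$---in $L^\infty_{t,x}$. The embeddings $L^2\hookleftarrow X^{0,0}$, $L^6\hookleftarrow Y^{0+}$ and $L^\infty\hookleftarrow Y^{\f{1}{2}+}$ recorded in Section~\ref{sec:not} close the estimate. The $T^\ve$ factor is produced by the time-localization estimate $\n{\eta(t/T)v}{X^{s,1/2-\de}}\lesssim T^\de\n{v}{X^{s,1/2}}$, applied to a single one of the $u$ factors to trade a small amount of modulation regularity for a small power of $T$.

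The main obstacle is the subcase in which $\lan{H_n}^{1/2}$ must be absorbed into the modulation of the very input $u_{j_0}$ that also carries the top derivative weight $k_{\max_1}^{s_1}$; that factor then lives only in $L^2_{t,x}$, and the remaining $n-1$ factors must collectively fit into $L^\infty_{t,x}\cdots L^\infty_{t,x}$. Verifying that the hypothesis $s_2>\f{1}{2}$ is precisely what places these low-frequency factors in $L^\infty_{t,x}$ via $Y^{\f{1}{2}+}$, while leaving just enough slack in the modulation weight to extract $T^\ve$, is the delicate bookkeeping that dictates the exact shape of \eqref{eq:prop1} and \eqref{eq:prop2}. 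An analogous but slightly sharper accounting is needed for the $l^2_k L^1_\tau$-piece of $Z^{s_0}$: pairing against $l^2_k L^\infty_\tau$ costs an extra $\lan{\tau-k^3}^{-\ve}$ summability factor, which must again be extracted from the modulation weights without overspending the regularity budget.
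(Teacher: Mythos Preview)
Your overall architecture---dualize the two pieces of $Z^{s_0}$, use the resonance identity $(\tau-k^3)-\sum_j(\tau_j-k_j^3)=-H_n$ to distribute $\lan{H_n}^{1/2}$ onto one of the $n+1$ modulations, then close by H\"older and linear embeddings---matches the paper exactly. The gap is in the specific H\"older scheme you propose. Under condition \eqref{eq:prop1} the multiplier bound leaves weights $k_{\max_1}^{s_1}$ \emph{and} $k_{\max_2}^{s_2}$ on two separate inputs; under \eqref{eq:prop2} three inputs carry $s_2$ derivatives. You propose to put all factors other than the modulation-carrying one and the $s_1$-carrying one into $L^\infty_{t,x}$, justifying this by ``$s_2>\tfrac12$''. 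But the embedding $Y^{1/2+}\hookrightarrow L^\infty_{t,x}$ only controls \emph{undifferentiated} copies of $u$: it gives $\|D^{s_2}u\|_{L^\infty_{t,x}}\lesssim\|u\|_{Y^{s_2+1/2+}}$, overspending by half a derivative. The hypothesis $s_2>\tfrac12$ is what lets the \emph{weightless} factors $u_{k_j}$ (for $j\geq 3$ in \eqref{eq:prop1}, $j\geq 5$ in \eqref{eq:prop2}) sit in $L^\infty$; it does nothing for the factors that still carry $s_2$ derivatives.

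The paper closes instead with the intermediate Strichartz estimates $\|v\|_{L^4_{t,x}}\lesssim\|v\|_{X^{0,1/3}}$ and, for \eqref{eq:prop2}, the interpolated $L^5$ and $L^{16/3}$ versions. These place $D^{s_2}u$ in $L^4$ (resp.\ $L^5$, $L^{16/3}$) at cost $\|u\|_{X^{s_2,1/3}}\leq\|u\|_{Y^{s_2}}$, with modulation slack $b<\tfrac12$ left over---and that slack is precisely where the $T^\ve$ gain comes from via time-localization, not from a separate input as you suggest. Concretely, for \eqref{eq:prop1} the paper uses $L^2\times L^4\times L^4\times(L^\infty)^{n-2}$ on the $(n{+}1)$-linear form; for \eqref{eq:prop2} with $n\geq 4$ it uses $(L^5)^5\times(L^\infty)^{n-4}$. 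Your handling of the $l^2_kL^1_\tau$ piece is also too vague: the paper splits into $\lan{\tau-k^3}\sim H_n$ (Cauchy--Schwarz in $\tau$ over this band reduces to the $X^{s_0,-1/2}$ estimate already done) versus $\lan{\tau-k^3}\not\sim H_n$ (Cauchy--Schwarz reduces to $X^{s_0,-1/3}$, and now the large modulation is guaranteed to land on an \emph{input}, so $z$ can afford $L^4$).
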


\begin{proof}
\textbf{Assuming condition \eqref{eq:prop1}}:  We need to show two estimates: one for $X^{s_0,-\f{1}{2}}$, and the other for $l^2_k L^1_\tau$ with a given weight. By splitting the frequency set $\Om_k$ into $n!$~rearranged partitions, we can assume $|k_1| \geq |k_2| \geq \cdots.$

Let the symbol above be bounded by $M>0$.  First we show the estimate for $X^{s,-\f{1}{2}}$. 
\[
\n{ T^n_\si(u, \cdots, u)}{X^{s_0, - \f{1}{2}}} = \sup_{\n{z}{X^{-s_0,\f{1}{2}}} = 1} \int_{\bt \times \br}  T^n_\si(u, \cdots, u) \, z   \, dx\, dt
\]
By Plancherel, the integral on the RHS is bounded by 
\[
\int_{\Gamma} \sum_{(k_1, \cdots, k_n) \in \Om_k}  \si(k_1, \cdots, k_n) \wh{z_k}(\tau)  \prod_{j=1}^n \wh{u_{k_j}}(\tau_j) \, d\Gamma
\]
where $d\Gamma$ is the inherited measure on the hyperplane $\Gamma$ given by
\[
\Gamma = \{ (\tau, \tau_1, \cdots, \tau_n, k, k_1,\cdots, k_n): \tau_1 + \cdots + \tau_n = \tau, k_1 + \cdots + k_n = k\}.
\]
Note that the summand above is controlled by
\begin{equation}\label{eq:pf1}
\lan{H_n}^{\f{1}{2}} |k_1|^{s_1} |k_2|^{s_2} |k|^{-s_0}  \wh{z_{k}}(\tau) \prod_{j=1}^n \wh{u_{k_j}}(\tau_j).
\end{equation}
By algebraic association, we note that
\[
H_n^{1/2} \lesssim  \sum_{j=0}^{n} \lan{\tau_j - k_j^3}^{1/2}
\]
where we denoted $\tau_0 := \tau$ and $k_0 := k$.  Then we can replace $H_n$ by the sum above.  The first term from \eqref{eq:pf1} is
\[
\lan{\tau_0 - k_0^3}^{1/2}|k_0|^{-s_0}  \wh{z_{k_0}}(\tau_0)  |k_1|^{s_1}\wh{u_{k_1}}(\tau_1) |k_2|^{s_2}\wh{u_{k_2}}(\tau_2)\prod_{j=3}^n \wh{u_{j,k_j}}(\tau_j).
\]
Although the argument is not totally symmetric, the other terms are not so different.  We thus omit the other terms (i.e. ones containing $\lan{\tau_j - k_j^3}$ for $j>0$).   Applying Plancherel, we apply H\"older's inequality, we place $L^2_{t,x}$~norm on $z$, $L^4_{t,x}$ on the next two terms and $L^{\infty}_{t,x}$ on the rest.  We obtain the bound which is
\[
\n{z}{X^{-s_0,\f{1}{2}}} \n{D_x^{s_1} u}{L^4_{t,x}} \n{D_x^{s_2} u}{L^4_{t,x}} \n{u}{L^\infty_{t,x}}^{n-2}.
\]
 By linear estimates, this is bounded by
 \[
\n{z}{X^{-s_0, \f{1}{2} }} \n{u}{X^{s_1,\f{1}{3}}} \n{u}{X^{s_2,\f{1}{3}}} \n{u}{Y^{s_2}}^{n-2}
\]
for any $s_2>\f{1}{2}$.  Note that we have a room spare in the $X^{s,b}$~weight.  Using time-localization, we can obtain a positive power in $T$ for the $X^{s,-\f{1}{2}}$ bound.
 
Next we need to estimate
\begin{equation}\label{eq:pf2}
\n{\f{\lan{k}^{s_0} \cf_{t,x}[T^n_\si]}{\lan{\tau-k^3}}}{l^2_k L^1_\tau}
\end{equation}
We will split into two cases: first when $\lan{\tau-k^3} \sim H_n$ and second when $\lan{\tau-k^3} \not\sim H_n$.

In the first case, Note that $\lan{\tau -k^3}^{-\f{1}{2}} \chi_{H_n}(\lan{\tau-k^3}) \in L^2_\tau$ uniformly in $k$ and $H_n$. Then \eqref{eq:pf2} under this restriction is bounded by
\[
\n{\f{\lan{k}^{s_0} \cf_{t,x}[T^n_\si]}{\lan{\tau-k^3}^{1/2}}}{l^2_k L^2_\tau} = \n{T^n_\si}{X^{s_0,-\f{1}{2}}}
\]
which is bounded by the RHS as before.

Next, if $\lan{\tau-k^3} \not\sim H_n$, then there is a $j_0 \in \{1,\cdots, n\}$ satisfying either $\lan{\tau_{j_0} - k_{j_0}^3} \gtrsim H_n$.   Using Cauchy-Swartz, \eqref{eq:pf2} is bounded by
\[
\n{\lan{\tau-k^3}^{-\f{1}{3}} \lan{k}^{s_0} \cf_{t,x}[T^n_\si]}{l^2_k L^2_\tau} = \n{T^n_\si}{X^{s_0,-\f{1}{3}}} = \sup_{\n{z}{X^{-s_0,\f{1}{3}}}} \int_{\Ga}\sum_{\Om_k} \si \, \prod_{j=1}^n \wh{u_{k_j}}\, \wh{z_k}\, d\Ga.
\]

Note that, in this case, $\lan{H_n}^{\f{1}{2}}$ can be replaced by $\sum_{j=1}^n \lan{\tau_j - k_j^3}^{\f{1}{2}}$.  Once again, we just take the first term of this sum:  In this case, we need to estimate
\[
 |k_0|^{-s}  \wh{z_{k_0}}(\tau_0) \lan{\tau_1 - k_1^3}^{\f{1}{2}} |k_1|^{s_1} \wh{u_{k_1}}(\tau_1) |k_2|^{s_2}   \wh{u_{k_2}}(\tau_2) \prod_{j=3}^n \wh{u_{k_j}}(\tau_j).
\]
Applying Plancherel and H\"older, we place $L^2_{t,x}$ on the middle term, $L^4_{t,x}$ on the first and the third term and $L^{\infty}_{t,x}$ on the rest. Then linear estimates give the bound
\[
\n{z}{X^{-s_0, \f{1}{3}}} \n{u}{X^{s_1, \f{1}{2}}} \n{u}{X^{s_2, \f{1}{3}}}  \n{u}{Y^{s_2}}^{n-2}
\]
where $X^{s_2,\f{1}{3}}$ can yield a positive power in $T$ by time localization.  This shows the desired result with assumption~\eqref{eq:prop1}.\\

\textbf{Assuming condition~\eqref{eq:prop2}:}  Here we do not need to use $H_n$, which makes the arguments simpler.  We will prove the statement for $n\geq 4$.  For $n=2,3$, the numerology resulting from H\"older and linear estimates are strictly better and we omit these computations below.   For the $X^{s_0,-\f{1}{2}}$ estimate, we need to bound
\[
|k_0|^{-s_0}  \wh{z_{k_0}}(\tau_0)  |k_1|^{s_1-\ve} |k_2|^{s_2} |k_3|^{s_2} |k_4|^{s_2} \prod_{j=1}^n \wh{u_{k_j}}(\tau_j)
\]
Using Plancherel and H\"older, we can estimate 
 \[
\n{D_{x}^{-s_0-\ve/5} z}{L^5_{t,x}}  \n{D_x^{s_1-\ve/5} u}{L^5_{t,x}} \n{D_x^{s_2-\ve/5} u}{L^5_{t,x}}^3\n{u}{L^{\infty}_{t,x}}^{n-4}. 
 \]
Using  $L^5_{t,x}$ and $L^{\infty}_{t,x}$ embeddings, we get
\[
\n{z}{X^{-s_0, \f{1}{2}-\ve}}  \n{u}{X^{s_1, \f{1}{2}-\ve}} \n{u}{X^{s_2,\f{1}{2}-\ve}}^3\n{u}{Y^{s_2}}^{n-4}. 
 \]
We can use $\n{u}{X^{s_1, \f{1}{2}-\ve}}$ to generate a small power in $T$ by time-localization.

Now for $l^2_k L^1_{\tau}$ estimate, we noted already that the case $\lan{\tau - k^3}\sim H_n$ reduces to the estimate for $X^{s_0,-\f{1}{2}}$ which is already established.  If $\lan{\tau-k^3}\sim H_n$, then we need to estimate
\[
 |k_0|^{-s_0}  \wh{z_{k_0}}(\tau_0)   |k_1|^{s_1-\ve} |k_2|^{s_2} |k_3|^{s_2} |k_4|^{s_2} \prod_{j=1}^n\wh{u_{k_j}}(\tau_j)
\]
 where $z \in X^{-s,\f{1}{3}}$.  Applying Plancherel and H\"older, we place $L^4_{t,x}$ on $z$, $L^{16/3}_{t,x}$ on four terms containing $u$ and $L^{\infty}_{t,x}$ on the rest. 
\[
\n{D_{x}^{-s_0} z}{L^4_{t,x}}  \n{D_x^{s_1-\ve/4} u}{L^{16/3}_{t,x}} \n{D_x^{s_2-\ve/4} u}{L^{16/3}_{t,x}}^3 \n{u}{L^{\infty}_{t,x}}^{n-4}. 
 \]
  By linear estimates and time-localization as before, we obtain the desired bound.
\end{proof}
The following lemma allows us to take symmetric estimates resulting from Proposition~\ref{pro:main2} and apply it to asymmetric variables as long as $s_1 = s_2$.
\begin{lemma}\label{le:asym}
Let $T_{\si}^n$ be a symmetric $n$-multi-linear operator mapping from $(Y)^n \to Z$ for some normed spaces $Y$ and $Z$.  Also, suppose that we are given
\[
\n{T_\si^n (u,\cdots, u)}{Z} \lesssim \n{u}{Y}^n \quad \tn{ for all } u \in Y.
\]
Then this implies 
\[
\n{T_{\si}^n (v_1,\cdots, v_n)}{Z} \lesssim_n \prod_{j=1}^n \n{v_j}{Y} \quad \tn{ for all } v_1, \cdots, v_n \in Y.
\]
\end{lemma}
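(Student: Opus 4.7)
The plan is to invoke the standard polarization identity for symmetric multilinear maps, apply the given diagonal estimate to each term, and convert the resulting bound into a product via a short scaling argument.

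First, I would write down the polarization formula
\[
T^n_\si(v_1, \cdots, v_n) = \f{1}{2^n n!} \sum_{\ve \in \{-1,1\}^n} \ve_1 \cdots \ve_n \, T^n_\si\pr{\sum_{j=1}^n \ve_j v_j, \cdots, \sum_{j=1}^n \ve_j v_j},
\]
which holds for any symmetric $n$-multilinear map. This identity is verified by expanding the right-hand side through multilinearity to obtain $\sum_{j_1,\cdots,j_n} T^n_\si(v_{j_1},\cdots,v_{j_n}) \sum_\ve \ve_1 \cdots \ve_n \ve_{j_1} \cdots \ve_{j_n}$. The inner sum over $\ve \in \{-1,1\}^n$ vanishes unless $(j_1,\cdots,j_n)$ is a permutation of $(1,\cdots,n)$, in which case it equals $2^n$; symmetry of $T^n_\si$ then collapses the resulting $n!$ permutation terms into $n! \, T^n_\si(v_1,\cdots,v_n)$, confirming the identity.

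Next, I would apply the given diagonal bound $\n{T^n_\si(u,\cdots,u)}{Z} \lesssim \n{u}{Y}^n$ to each term on the right-hand side of the polarization formula and use the triangle inequality in $Y$ to conclude
\[
\n{T^n_\si(v_1,\cdots,v_n)}{Z} \lesssim_n \sum_\ve \n{\sum_{j=1}^n \ve_j v_j}{Y}^n \lesssim_n \pr{\sum_{j=1}^n \n{v_j}{Y}}^n.
\]
The final task is to convert this sum-type upper bound into the required product. Using multilinearity of $T^n_\si$, for any positive scalars $t_1,\cdots, t_n$ the above is equivalent to
\[
\n{T^n_\si(v_1,\cdots,v_n)}{Z} = \f{1}{t_1 \cdots t_n}\n{T^n_\si(t_1 v_1,\cdots,t_n v_n)}{Z} \lesssim_n \f{1}{t_1\cdots t_n}\pr{\sum_{j=1}^n t_j \n{v_j}{Y}}^n.
\]
Choosing $t_j := \n{v_j}{Y}^{-1}$ (and handling the trivial case of some $v_j = 0$ separately by multilinearity) yields $\sum_j t_j \n{v_j}{Y} = n$, so the right-hand side reduces to $n^n \prod_{j=1}^n \n{v_j}{Y}$, which is the claimed estimate with constant depending only on $n$.

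There is no serious obstacle in this scheme; the only subtlety worth flagging is the last step. A naive application of polarization produces a bound involving $\pr{\sum_j \n{v_j}{Y}}^n$, which is strictly larger than the required product in general. The homogeneity built into $n$-linearity, leveraged through the rescaling $v_j \mapsto t_j v_j$ and optimization over $t_j$, is precisely what bridges this gap.
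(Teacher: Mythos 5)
Your proof is correct and follows essentially the same strategy as the paper: a polarization identity to express $T^n_\si(v_1,\dots,v_n)$ in terms of diagonal evaluations of the form $T^n_\si(w,\dots,w)$, followed by the homogeneity rescaling $v_j \mapsto v_j/\n{v_j}{Y}$ to upgrade the resulting bound $\pr{\sum_j \n{v_j}{Y}}^n$ to the product $\prod_j \n{v_j}{Y}$. The only difference is cosmetic: you use the standard $\ve\in\{-1,1\}^n$ polarization formula, while the paper uses the equivalent inclusion--exclusion variant over subsets of $\{1,\dots,n\}$.
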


\begin{proof}
For simplicity of notation, we will denote $T_\si^n (u) := T_\si^n (u,\cdots, u)$ when all input is identical.  Also, for any $k=1,2,\cdots, n$, let $\mathcal{P}_k$ be a set of subsets of $\{1,\cdots, n\}$ with size $k$.    We first observe the following identity:
\[
T_\si^n \pr{\sum_{j=1}^n v_j} - \sum_{\mathcal{A}\in \mathcal{P}_1} T_{\si}^n \pr{\sum_{j\not\in \mathcal{{A}}} v_j} + \sum_{\mathcal{A}\in \mathcal{P}_2} T_{\si}^n \pr{\sum_{j \not\in \mathcal{A} } v_j}- \cdots = C_n T_\si^n (v_1, v_2, \cdots, v_n).
\]
To see this identity, consider the full expansion of the first term $T_\si^n \pr{\sum_{j=1}^n v_j}$.  We would like to remove all terms from this expansion that do not include all of $v_1, \cdots, v_n$.  

Say a term from the expansion contains $v_j$ repeated $k_j$ times for $j=1,\cdots, n$ with $\sum_{j=1}^n k_j = n$.  Without loss of generality, say that $k_1 =0$: i.e. the term does not include $v_1$.  The number of occurrences of this term from expansion is the same as the number of occurrences of the identical term from expansion of $T_{\si}^n \pr{\sum_{j\neq 1} v_j}$.  Now, if $k_1$ is the only zero index, then this term will be immediately eliminated.  But if $k_1=k_2=0$, then this term is counted twice: once for $T_{\si}^n \pr{\sum_{j\neq 1} v_j}$ and another time for $T_{\si}^n \pr{\sum_{j\neq 2} v_j}$.  In this case, this repetition is canceled out by the third term $T_{\si}^n \pr{\sum_{j\neq 1,2} v_j}$.  Iterating in this manner, the identity above can be established. 

Now, add coefficients $a_j>0$ to $v_j$ which we will determine later.  Then we have
\[
T_\si^n \pr{\sum_{j=1}^n a_j v_j} + \sum_{k=1}^{n-1} (-1)^k \sum_{\mathcal{A}\in \mathcal{P}_k} T_{\si}^n \pr{\sum_{j\not\in \mathcal{{A}}} a_j v_j}  = C_n T_\si^n (v_1, v_2, \cdots, v_n).
\]
Using the given symmetric estimate, we can write
\begin{align*}
\pr{\prod_{j=1}^n a_j} \n{T_\si^n (v_1, v_2, \cdots, v_n) }{Z} &\lesssim_n \n{T_\si^n \pr{\sum_{j=1}^n a_j v_j}}{Z} + \sum_{k=1}^{n-1}\sum_{\mathcal{A}\in \mathcal{P}_k} \n{T_{\si}^n \pr{\sum_{j\not\in \mathcal{{A}}} a_j v_j}}{Z}\\
	&\lesssim_n \n{\sum_{j=1}^n a_j v_j}{Y}^n + \sum_{k=1}^{n-1}\sum_{\mathcal{A}\in \mathcal{P}_k} \n{\sum_{j\not\in \mathcal{{A}}} a_j v_j}{Y}^n\\
	&\lesssim_n \pr{\sum_{j=1}^n a_j \n{v_j}{Y}}^n + \sum_{k=1}^{n-1}\sum_{\mathcal{A}\in \mathcal{P}_k} \pr{\sum_{j\not\in \mathcal{{A}}} a_j\n{v_j}{Y}}^n.
\end{align*}
Now select $a_j = 1/\n{v_j}{Y}$.  Then RHS of above is $O_n(1)$.  Thus, dividing by $\prod_{j=1}^n a_j$, we obtain
\[
 \n{T_\si^n (v_1, v_2, \cdots, v_n) }{Z} \lesssim_n \pr{\prod_{j=1}^n a_j}^{-1} = \prod_{j=1}^n \n{v_j}{Y}.
 \]
\end{proof}

\subsection{Non-smoothing estimates}
Following is a non-smoothing estimate to obtain a priori estimate of the solution~$u$ of \eqref{eq:util} in $Y^s$.  Using the proposition established above, we only need to walk through cases A, B, C, D of Proposition~\ref{pro:disp}.  
\begin{lemma}\label{le:lwp}
For $s>\f{1}{2}$, there exists an $\ve>0$ such that for any $0< T\ll 1$,
\[
\n{\rr^2[u]\|_{Z^s} + \|\nr[u] }{Z^s} \lesssim_\ve T^{\ve} \n{u}{Y^s} \left(\n{u}{Y^{\f{1}{2}+\ve}}^{n-1} + \n{u}{Y^{\f{1}{2}+\ve}}^{m-1}\right).
\]
\end{lemma}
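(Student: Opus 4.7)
The plan is to realize $\p_x(u^n)$ as the symmetric multilinear multiplier $T^n_\si(u,\ldots,u)$ with symbol $\si(k_1,\ldots,k_n) = ik$, and then apply Proposition~\ref{pro:main} with target weights $s_0 = s_1 = s$ and $s_2 = \f{1}{2}+\ve$. This choice produces exactly the asymmetric bound $\n{u}{Y^s}\n{u}{Y^{1/2+\ve}}^{n-1}$, and the $T^\ve$ factor is built into that proposition. The nonlinearity $\p_x(u^m)$ is treated in the same way, so I would focus on $\p_x(u^n)$ alone.

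I would partition the frequency domain $\Om_k$ using Proposition~\ref{pro:disp}. The non-resonant piece $\nr[u]$ already excludes Case~B (no interior frequency equals $k$), so it splits into Cases A, C, D. For $\rr^2[u]$, the inclusion--exclusion structure from Section~\ref{sec:not} means every summand belongs to some $R_k^{l_1}\cap R_k^{l_2}\cap\cdots$ with at least two interior frequencies equal to $k$; the remaining $n-2$ indices then sum to $-k$, forcing at least one more to have magnitude $\gtrsim |k|$. Hence $k_{\max_3}\gtrsim k$ there, placing $\rr^2[u]$ in Case~C.

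It remains to verify the multiplier bounds \eqref{eq:prop1}--\eqref{eq:prop2}, using throughout that $|\si|=|k|\lesssim k_{\max_1}$. In Case~A, $\lan{H_n}^{1/2}\gtrsim k_{\max_1}$ absorbs one power of $k_{\max_1}$, so \eqref{eq:prop1} reduces to $\f{|k|^{s+1}}{k_{\max_1}^{s+1}k_{\max_2}^{1/2+\ve}}\lesssim k_{\max_2}^{-(1/2+\ve)}\lesssim 1$. In Case~C, the chain $k_{\max_1}\geq k_{\max_2}\geq k_{\max_3}\gtrsim k$ yields $\f{|k|^{s+1}}{k_{\max_1}^{s-\ve}\,k^{1+2\ve}\,k_{\max_4}^{1/2+\ve}}\lesssim |k|^{-\ve}\lesssim 1$. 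In Case~D, I would first rewrite the hypothesis as $k_{\max_3}^{1+2\ve}k_{\max_4}^{1/2+\ve}=(k_{\max_3}^2 k_{\max_4})^{1/2+\ve}\gtrsim k_{\max_1}^{1+2\ve}$, then use the cheap monotonicity $k_{\max_2}^{1/2+\ve}\geq k_{\max_3}^{1/2+\ve}$ to redistribute a factor, obtaining $(k_{\max_2}k_{\max_3}k_{\max_4})^{1/2+\ve}\gtrsim k_{\max_1}^{1+2\ve}$, which reduces \eqref{eq:prop2} to $k_{\max_1}^{-\ve}\lesssim 1$.

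The main obstacle I would anticipate is precisely Case~D: the configuration $k_{\max_2}\ll k_{\max_1}$ is not a priori excluded (only $k_{\max_2}\gtrsim k_{\max_1}^{2/3}$ is forced), so the $s_2=\f{1}{2}+\ve$ weight on the second largest frequency cannot by itself recover $k_{\max_1}^{1/2+\ve}$. The trick of swapping $k_{\max_3}^{1/2+\ve}$ into the $k_{\max_2}$ slot via $k_{\max_2}\geq k_{\max_3}$ is what makes the Case~D hypothesis usable in the asymmetric setting. No actual smoothing is claimed here, consistent with the $Y^s\to Z^s$ nature of the estimate; the genuine smoothing in Theorem~\ref{th:smoothing} will come from a normal-form transformation eliminating the dangerous Case~A contribution in Section~\ref{sec:4}.
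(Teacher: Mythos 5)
Your proposal is correct and follows the paper's proof essentially verbatim: same symbol $\si = ik$, same case split via Proposition~\ref{pro:disp}, same reduction of $\rr^2$ to Case C via the two-repeated-resonant-index observation, and the same multiplier bounds with $s_0 = s_1 = s$, $s_2 = \tfrac{1}{2}+\ve$ (the paper's preamble misstates $s_2 = \min(s,1)$, but its actual Case C and D computations use $\tfrac{1}{2}+\ve$ exactly as you do). Your explicit redistribution $k_{\max_2}^{1/2+\ve}\geq k_{\max_3}^{1/2+\ve}$ in Case D is implicit in the paper's one-line estimate but is the same mechanism.
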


\begin{proof}
Here, the symbol $\si = ik$ for both $\rr^2$ and $\nr$.  In context of Proposition~\ref{pro:main}, $s_0 = s_1 = s$ and $s_2 = \min (s,1)$. \\

[Case A] Here, $H_n\gtrsim k_{\max}^2$.  Then condition \eqref{eq:prop1} is written as
\[
\f{|k|^{s} |ik|}{\lan{H_n}^{\f{1}{2}} k_{\max_1}^s} \lesssim \f{|k|^{s+1}}{k_{\max}^{1+s}}  = O(1)
\]  
which is satisfied for any $s>-1$.  Note that, for $n=2$, this is all that is required to show the desired statement.\\

[Case B]  Only $\rr^2[u]$ contains components belonging to Case B.  But in this case, we must have two internal frequencies equal to $k$.  Without loss of generality, say that $k_1 = k_2 = k$.  Then we must have $k + k_3+ \cdots + k_n = 0$, which forces $\max\{|k_3|,\cdots, |k_n|\} \sim k$.  This implies $k_{\max_3} \gtrsim k$ which makes $\rr^2[u]$ belong to Case C.  So we defer estimates for $\rr^2[u]$ to Case C.\\

[Case C] In this case, $k_{\max_3} \gtrsim k$.  Using condition \eqref{eq:prop2},
\[
\f{|k|^{s} |ik|}{k_{\max_1}^{s-\ve} (k_{\max_2} k_{\max_3})^{\f{1}{2}+\ve}}  = O(1)
\]
as long as $\ve>0$.\\

[Case D]  Recall that this case is only for $n\geq 4$.  In this case, we must have $k_{\max_3}^2 k_{\max_4} \gtrsim k_{\max}^2$.  Then condition~\eqref{eq:prop2} can be written as
\[
\f{|k|^s |ik|}{k_{\max_1}^{s-\ve} (k_{\max_2}k_{\max_3}k_{\max_4})^{\f{1}{2}+\ve}}\leq \f{|k|^{s+1}}{k_{\max}^{s+1+\ve}} = O(1),
\]
which is true for any $\ve>0$.
\end{proof}

\begin{remark}
Combining  Lemma~\ref{le:asym} and Lemma~\ref{le:lwp} with $\ve = s - \f{1}{2}$, we obtain that for any $v_1, \cdots ,v_n \in Y^s$ and $s>\f{1}{2}$ such that each $v_j(t)$ is supported in $t\in [-T,T]$,
\begin{equation}
\label{eq:asym}
\n{\rr^2}{Z^s}+ \n{\nr }{Z^s} \lesssim_\ve T^{\ve} \left(\prod_{j=1}^n \n{v_j}{Y^s} + \prod_{j=1}^m \n{v_j}{Y^s}  \right)
\end{equation}
where $\rr^2, \nr$ contain input $(v_1, \cdots, v_n)$ and $(v_1, \cdots, v_m)$.
\end{remark}

\section{Proof of Theorem~\ref{th:lwp}}
\label{Sec:3} 
Before we can prove Theorem~\ref{th:lwp}, we prove a weaker version of this theorem. 
\begin{proposition}\label{pro:lwp}
Let $s>\f{1}{2}$ and $f\in H^s$.  Then for some $T = T(\n{f}{H^{s}})>0$, the equation \eqref{eq:util} has a unique solution $u\in Y^s_T$ satisfying
\[
\n{u}{Y^s_T} \lesssim  \n{f}{H^{s}}.
\]
\end{proposition}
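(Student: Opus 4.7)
The plan is a standard fixed-point argument in $Y_T^s$, using the linear estimates from Proposition~\ref{pro:iteam} together with the multi-linear bound Lemma~\ref{le:lwp} and its asymmetric extension \eqref{eq:asym} as the nonlinear input. Writing the Duhamel form of \eqref{eq:util} as
\[
u(t) = e^{t\p_x^3} f + \int_0^t e^{(t-t')\p_x^3}\bigl(\rr^2[u]+\nr[u]\bigr)(t')\,dt',\qquad t\in[0,T],
\]
I introduce a smooth cutoff $\eta\in C^\infty_c(\br)$ with $\eta\equiv 1$ on $[-1,1]$ and define the map
\[
\Phi(u)(t) := \eta(t)\,e^{t\p_x^3} f + \eta(t)\int_0^t e^{(t-t')\p_x^3}\bigl(\rr^2[u]+\nr[u]\bigr)(t')\,dt',
\]
which we will show contracts on a suitable ball of $Y^s$, and then restrict to $[0,T]$ to land in $Y_T^s$.

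First I would apply Proposition~\ref{pro:iteam} to bound the linear piece by $C_0\n{f}{H^s}$, so that setting $R:=2C_0\n{f}{H^s}$ and working in the closed ball $B_R:=\{u\in Y^s:\n{u}{Y^s}\le R\}$ (with the implicit time-localization by $\eta(t/T)$ built into the nonlinear evaluation) the linear contribution is at most $R/2$. For the nonlinear part, Proposition~\ref{pro:iteam} reduces matters to the $Z^s$ norm of $\rr^2[u]+\nr[u]$, which by Lemma~\ref{le:lwp} with the choice $\ve=s-\tfrac12>0$ is controlled by
\[
C_1\,T^{\ve}\n{u}{Y^s}\bigl(\n{u}{Y^{1/2+\ve}}^{n-1}+\n{u}{Y^{1/2+\ve}}^{m-1}\bigr)\le C_1\,T^{\ve}\bigl(R^n+R^m\bigr),
\]
using $Y^s\hookrightarrow Y^{1/2+\ve}$. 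Choosing $T=T(\n{f}{H^s})$ small enough so that $C_1 T^{\ve}(R^{n-1}+R^{m-1})\le 1/4$ makes $\Phi$ a self-map of $B_R$.

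For contraction, I invoke the asymmetric version \eqref{eq:asym}: since $\rr^2$ and $\nr$ are sums of $n$- and $m$-linear operators, the difference $\bigl(\rr^2[u]+\nr[u]\bigr)-\bigl(\rr^2[v]+\nr[v]\bigr)$ telescopes into a sum of multi-linear terms in which one slot carries $u-v$ and the other slots carry either $u$ or $v$. Applying \eqref{eq:asym} slot-by-slot and summing yields
\[
\n{\Phi(u)-\Phi(v)}{Y^s}\le C_2\,T^{\ve}\bigl(R^{n-1}+R^{m-1}\bigr)\n{u-v}{Y^s},
\]
and shrinking $T$ further if necessary gives a contraction factor $\le 1/2$. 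Banach's fixed point theorem then produces a unique $u\in B_R$ with $\Phi(u)=u$; restricting to $[0,T]$ yields a solution of \eqref{eq:util} in $Y^s_T$ with $\n{u}{Y^s_T}\lesssim\n{f}{H^s}$.

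The only genuinely technical step is the multi-linear bound, but that is already built into Lemma~\ref{le:lwp}; uniqueness in the full class $Y^s_T$ (rather than just inside $B_R$) follows by a standard continuity-in-$T$ argument using the contraction estimate applied to any two putative solutions and shrinking the time interval until both lie in a common ball.
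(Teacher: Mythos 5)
Your proof is correct and follows essentially the same route as the paper: a contraction mapping argument in the time-localized Bourgain space, with Proposition~\ref{pro:iteam} handling the linear pieces and Lemma~\ref{le:lwp} (taken at $\ve=s-\f{1}{2}$) together with the asymmetric estimate \eqref{eq:asym} supplying the nonlinear and difference bounds. The only cosmetic difference is that you center your ball at the origin with radius comparable to $\n{f}{H^s}$, whereas the paper centers it at the free evolution $e^{t\p_x^3}f$ with a small radius; both choices make the self-map and contraction steps go through identically.
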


\begin{proof}
Let $\eta = \eta(t)$ be a smooth cutoff function with $\eta \equiv 1$ on $[-1,1]$ and $\eta \equiv 0$ for $|t|\geq 2$.  For $t\in [0,T]$, the equation \eqref{eq:util} can be formulated as 
\[
u (t) = e^{t\p_x^3} f + \int_0^t e^{(t-s)\p_x^3} \left(\rr^2 [ \eta(s/T)u(s)] +  \nr[\eta(s/T)u(s)]\right)\, ds=: \Ga_T[u].
\]
We will show that, for small $0<T\ll 1$, $\Ga_T$ is a contraction map in $Y^s_T$ inside a small ball~$B$ centered at $e^{t\p_x^3} f$ with radius $0<r < \n{f}{H^s}$ so that $\n{u}{Y_T^s} \leq C\n{f}{H^s}$ for all $u\in Y_T^s$.   Let $v \in Y^s$ with $v(t) = u(t)$ for $0\leq t \leq T$ and satisfying $\n{v}{Y^s} \leq 2 \n{u}{Y^s_T}$.  Then applying Proposition~\ref{pro:iteam} and Lemma~\ref{le:lwp} with $\ve = s-\f{1}{2}$,
\begin{align*}
\n{\Ga_T[u] - e^{t\p_x^3} f}{Y^s_T} &\lesssim \n{\int_0^t e^{(t-s)\p_x^3} \left(\rr^2 [ \eta(s/T)u(s)] +  \nr[\eta(s/T)u(s)]\right)\, ds}{Y_T^s}\\
	&\lesssim \n{\eta(t) \int_0^t e^{(t-s)\p_x^3} \left(\rr^2 [ \eta(s/T)v(s)] +  \nr[\eta(s/T)v(s)]\right)\, ds}{Y^s}\\
	&\lesssim \n{\rr^2 [ \eta(t/T)v(t)] +  \nr[\eta(t/T)v(t)]\, ds}{Z^s}\\
	&\lesssim T^\ve  \left(\n{v}{Y^{s}}^{n} + \n{v}{Y^{s}}^{m}\right)
	\lesssim T^\ve  \left(\n{u}{Y_T^{s}}^{n} + \n{u}{Y_T^{s}}^{m}\right)
	\lesssim \\
	&\lesssim T^\ve  \left(\n{f}{H^{s}}^{n} + \n{f}{H^{s}}^{m}\right).
\end{align*}
Select $T = T(\n{f}{H^s})$ so that the RHS above is smaller than the given radius $r$, we can show that $\Ga_T : B \to B$.

Next, we will show that $\Ga_T$ is a contraction on $B$ for a small $T>0$.  Let $u,v\in B$.   Then using analogous computations as the one directly above along with \eqref{eq:asym}, we can obtain 
\[
\n{\Ga_T[u] - \Ga_T[v]}{Y^s_T} \lesssim  T^\ve\left(\n{f}{H^{s}}^{n-1} + \n{f}{H^{s}}^{m-1} \right) \n{u - v}{Y^s_T} .
\]
Selecting a small $T = T(\n{f}{H^s})$ so that the implicit coefficient of $\n{u-v}{Y^s_T}$ is smaller than 1, we have proved our claim that $\Ga_T$ is a contraction map on $B$ for $T= T(\n{f}{H^s})$.  This proves an existence and uniqueness of solutions in $Y^s_T$ as well as an an estimate claimed in the statement.
\end{proof}
Now we are ready to prove Theorem~\ref{th:lwp}.  First, Let $v \in Y^s$ satisfying $v(t) = u(t)$ for all $0\leq t \leq T$ and  $\n{v}{Y^s} \leq 2 \n{u}{Y^s_T}$.  Let $u = \Ga_T[u]$ in $Y^s_T$ as given in the proof of Lemma above.  Then using Proposition~\ref{pro:iteam} and Lemma~\ref{le:lwp},
\begin{align*}
\n{u}{Y^s_T} &\leq \n{\eta(t) e^{t\p_x^3} f}{Y^s} + \n{\eta(t) \int_0^t e^{(t-s)\p_x^3} \left(\rr^2 [ \eta(s/T)v(s)] +  \nr[\eta(s/T)v(s)]\right)\, ds}{Y^s}\\
	& \lesssim \n{f}{H^s} + \n{\rr^2 [ \eta(t/T)v(t)] +  \nr[\eta(t/T)v(t)]}{Z^s}\\
	&\lesssim  \n{f}{H^s} + T^{\ve} \n{ u}{Y_T^s} \left(\n{ u}{Y_T^{\f{1}{2}+\ve}}^{n-1} + \n{ u}{Y_T^{\f{1}{2}+\ve}}^{m-1}\right).
\end{align*}
Taking $T \leq T(\n{f}{H^{\f{1}{2}+\ve}})$ given in Theorem~\ref{le:lwp}, we obtain
\[
\n{u}{Y^s_T} \lesssim \n{f}{H^s} + T^{\ve} \n{ u}{Y_T^s} \left(\n{f}{H^{\f{1}{2}+\ve}}^{n-1} + \n{f}{H^{\f{1}{2}+\ve}}^{m-1}\right)
\]
Select $T = T(\n{f}{H^{\f{1}{2}+\ve}})$ so that
\[
 T^{\ve} \left(\n{f}{H^{\f{1}{2}+\ve}}^{n-1} + \n{f}{H^{\f{1}{2}+\ve}}^{m-1}\right)\ll 1,
\]
we have the desired estimate.

Note that length of each time interval $T$ depends on $\n{f}{H^{\f{1}{2}+\ve}}$ only.  So, if $s\geq 1$ and we assume a priori control of $\n{u(t)}{H^1_x}$, then we can take a uniform time-step $T = T(\n{f}{H^1})$ and iterate this in time.  In this case, analogous estimate would state.
\[
\n{u}{Y^s_{[nT, (n+1)T]}} \lesssim \n{u(nT)}{H^s}.
\]
This proves Theorem~\ref{th:lwp}.
\section{Normal form transformation and asymmetric estimates}
\label{sec:4} 
Next we proceed with the required smoothing estimates which is achieved via normal form transformation.  Normal form method was introduced by Shatah \cite{Sh} in context of a scattering problem for cubic Klein-Gordon equation.  Since then, normal form and other related methods have been used for various nonlinear dispersive models to gain derivatives for nonlinearities.  See for instance \cite{titi, erdogan, fnls, KO, japanese, Oh, TT} and references therein.

Without normal form, we can only establish a non-smoothing estimate given in Lemma~\ref{le:lwp}.   Thus we further decompose our RHS and perform normal form transformation in order to obtain smoothing.

To that end,  we split $\nr$ into two components: $\nr[u] = \bb[u]+ \mathcal{HH}[u]$ where $\bb$ stands for a high-low frequency interaction and $\mathcal{HH}$ stands for a high-high frequency interaction.
\begin{align*}
\nr^1[u]_k &= \sum_{l=1}^n\sum_{\tiny\begin{array}{c} k_1 + \cdots + k_n = k\\ |k_l| \gg \max_{j\neq l} |k_j| \\ \sum_{j\neq l} k_j \neq 0\end{array}  } ia k \prod_{j=1}^n u_{k_j} + \sum_{l=1}^m \sum_{\tiny\begin{array}{c} k_1 + \cdots + k_m = k\\ |k_l| \gg \max_{j'\neq l} |k_{j'}|\\ \sum_{j'\neq l} k_{j'} \neq 0\end{array}  } ib k \prod_{j'=1}^m u_{k_{j'}}\\
&=  an\sum_{\tiny\begin{array}{c} k_1 + \cdots + k_n = k\\ |k_1| \gg \max_{j \geq 2} |k_j| \\ \sum_{j=2}^n k_j \neq 0\end{array}  } i k \prod_{j=1}^n u_{k_j} + bm\sum_{\tiny\begin{array}{c} k_1 + \cdots + k_m = k\\ |k_1| \gg \max_{j'\geq 2} |k_{j'}|\\ \sum_{j'=2}^m k_{j'} \neq 0\end{array}  } i k \prod_{j'=1}^m u_{k_{j'}}\\
&=: an \bb^n[u,\cdots, u]_k + bm\bb^m[u,\cdots,u]_k
\end{align*}
where we rearranged frequency indices to make the first input of $\bb^n$ and $\bb^m$ to carry the highest frequency component.  By construction, $\mathcal{HH}$ carries at least two high internal frequencies and is non-resonant.  Then we can rewrite \eqref{eq:util} as
\[
u_t + u_{xxx} = \rr^2[u] + an  \bb^n[u,\cdots, u] + bm \bb^m[u,\cdots,u] + \mathcal{HH}[u].
\]

In the next lemma, we will see that $\rr^2$ and $\mathcal{HH}$ are already smooth.  As for $\bb^n$ and $\bb^m$, we can see from Proposition~\ref{pro:disp} that these terms belong to Case A which is non-resonant.  For the part of $\bb^1$ which  has free solution $e^{t\p_x^3}f$ in the first component, we will filter out using normal form transform.   For this purpose, we define normal form operators~$T_n$ for any $n\geq 2$:
\[
T^n_{\mathcal{NF}} (f^1, \cdots, f^n) := \sum_{\tiny\begin{array}{c} k_1 + \cdots + k_n = k\\ k_1 \gg \max(k_2,\cdots, k_n)\\ k_2 + \cdots + k_n \neq 0\end{array} }\f{k}{H_n(k_1,\cdots,k_n)}  \prod_{j=1}^n f^j_{k_j} e^{i(k_1+\cdots+ k_n)x}, 
\]
where $\mathcal{NF}$ stands for the normal form symbol for the given $n$.  For any smooth functions $f = f(x)$ and $v = v(t,x)$, we have
\begin{equation}
\label{eq:remain}
(\p_t + \p_{x}^3) T^n_{\mathcal{NF}}  (e^{t\p_x^3} f,v) = \bb^n[ e^{t\p_x^3}f , v] + (n-1) T^n_{\mathcal{NF}}  (e^{t\p_x^3} f, v, (\p_t + \p_{x}^3) v).
\end{equation}
Then we define a new variable~$w$ 
\begin{equation}\label{eq:transf}
u = e^{t\p_x^3} f + a nT^n_{\mathcal{NF}} (e^{t\p_x^3} f,u, \cdots ,u) + bm T^m_{\mathcal{NF}} (e^{t\p_x^3} f,u,\cdots, u) + w
\end{equation}
where $u$ solves \eqref{eq:util}.  We will abbreviate the normal form terms as $T^n_{\mathcal{NF}}$ and $T^m_{\mathcal{NF}}$ when their inputs are the same as above.  Then $w$ satisfies the equation given by:
\begin{align}
w_t + w_{xxx} &= \rr^2[u] + \mathcal{HH}[u] \label{eq:w1}\\
	&+ an\bb^n[w, u,\cdots, u]  + bm \bb^m [w,u,\cdots, u] \label{eq:w2}\\
	& + (n-1) T^n_{\mathcal{NF}}  (e^{t\p_x^3} f, u,\cdots, u, \rr^2[u] + \nr[u]) \label{eq:w3}\\
	& + (m-1) T^m_{\mathcal{NF}}  (e^{t\p_x^3} f, u,\cdots, u, \rr^2[u] + \nr[u]) \label{eq:w4}\\
	& + a^2n^2\bb^n[ T^n_{\mathcal{NF}}  , u,\cdots, u] +b^2m^2\bb^m[T^m_{\mathcal{NF}} , u,\cdots, u]\label{eq:w5}\\
	& + abnm \left(\bb^n[T^m_{\mathcal{NF}}  , u,\cdots, u] +\bb^m[T^n_{\mathcal{NF}} , u,\cdots, u]\right) \label{eq:w6}\\
w\vert_{t=0} &= -T^n_{\mathcal{NF}} (f,\cdots, f) -T^m_{\mathcal{NF}}  (f, \cdots, f). \label{eq:w7}
\end{align}
Note that terms in \eqref{eq:w2}, \eqref{eq:w5} and \eqref{eq:w6} result from replacing the first input $u$ of $\bb^n$ and $\bb^m$ by \eqref{eq:transf}.  The term with free solution $e^{t\p_x^3} f$ as the first input is eliminated by the normal form.   Terms in \eqref{eq:w3} and \eqref{eq:w4} are the remainder terms from normal form resulting from \eqref{eq:remain}.  Finally, the initial data \eqref{eq:w7} can be obtained by using \eqref{eq:transf}.

The following lemma places the initial data \eqref{eq:w7} in $H^{s+1}$.  This also sets a ceiling for any possible smoothing to follow.
\begin{lemma}\label{le:t}
For any $n\geq 2$ and $s>\f{1}{2}$,
\[
\n{ T^n_{\mathcal{NF}}(u,v,\cdots, v)}{H^{s+1}}  \lesssim_\ve \n{u}{H^s} \n{v}{H^{\f{1}{2}+\ve}}^{n-1}.
\]
\end{lemma}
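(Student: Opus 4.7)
The overall strategy is based on two observations: on the support of $T^n_{\mathcal{NF}}$, the first Fourier frequency dominates (so $|k|\sim|k_1|$), and the multiplier $\frac{k}{H_n}$ should provide two derivatives of smoothing, exactly matching the gap in $H^{s+1}$. First I would establish the dispersive lower bound $|H_n|\gtrsim k_1^2$ on the support. For $n=2,3$ this is immediate from the factorizations $H_2=3kk_1k_2$ and $H_3=3(k_1+k_2)(k_2+k_3)(k_3+k_1)$: in each, two factors are of magnitude $\sim|k_1|$ while the remaining factor is $\tilde k_2 := k_2+\cdots+k_n$, a nonzero integer by hypothesis. For $n\ge 4$ I would use the expansion
\[
H_n = 3k_1^2\tilde k_2 + 3k_1\tilde k_2^2 + \tilde k_2^3 - \sum_{j\ge 2}k_j^3,
\]
noting that the leading term has size $\ge 3k_1^2$ while the remainder is controlled by powers of $M:=\max_{j\ge 2}|k_j|$, which is rendered subordinate via the high-low constraint (invoking Proposition~\ref{pro:disp}, one reduces to Case A or to Case D where $k_{\max_3}^2 k_{\max_4}\gtrsim k_1^2$).

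Granting $|H_n|\gtrsim k_1^2$, the multiplier satisfies $|k/H_n|\lesssim 1/|k_1|$, so together with $\langle k\rangle\sim\langle k_1\rangle$ we obtain the pointwise bound
\[
\langle k\rangle^{s+1}\bigl|(T^n_{\mathcal{NF}})_k\bigr| \lesssim \sum_{k_1+\tilde k_2=k}\langle k_1\rangle^s|u_{k_1}|\prod_{j=2}^n|v_{k_j}|.
\]
Setting $F(l):=\langle l\rangle^s|u_l|$ and $G(l):=|v_l|$, the right-hand side is dominated by the discrete convolution $(F\ast G^{\ast(n-1)})(k)$. Iterated discrete Young's inequality on $\mathbb{Z}$ then yields $\|F\ast G^{\ast(n-1)}\|_{\ell^2}\le \|F\|_{\ell^2}\|G\|_{\ell^1}^{n-1}=\|u\|_{H^s}\|v\|_{\mathcal F\ell^1}^{n-1}$. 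Finally, Cauchy-Schwarz with the summable weight $\langle l\rangle^{-1-2\varepsilon}$ yields $\|v\|_{\mathcal F\ell^1}\lesssim_\varepsilon\|v\|_{H^{1/2+\varepsilon}}$ for any $\varepsilon>0$, giving the claimed inequality.

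The main obstacle is the dispersive bound for $n\ge 4$: the cubic sum $\sum k_j^3$ can in principle conspire with $3k_1^2\tilde k_2$ to drive $|H_n|$ well below $k_1^2$ --- this is precisely Case D of Proposition~\ref{pro:disp}. In that regime the dispersive approach degrades and one must instead exploit the structural inequality $k_{\max_3}^2 k_{\max_4}\gtrsim k_1^2$, which forces $\prod_{j=2}^n\langle k_j\rangle^{1/2+\varepsilon}\gtrsim k_1^{1+2\varepsilon}$; this surplus weight from the $v$ factors can absorb the lost smoothing from $|H_n|$ via a Cauchy-Schwarz argument in place of Young's. One therefore splits the summation into the two regimes and combines the resulting bounds.
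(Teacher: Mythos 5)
Your main line of argument—pin down $\lvert H_n\rvert\gtrsim k_1^2$ on the support, use $\langle k\rangle\sim\langle k_1\rangle$ together with the multiplier size to reduce to a weighted discrete convolution, then apply iterated Young followed by Cauchy--Schwarz to pass from $\ell^1$ to $H^{\frac12+\varepsilon}$—is exactly the paper's argument, which is extremely terse (it simply asserts $H_n\gtrsim k_1^2$ "according to Proposition~\ref{pro:disp}" and runs Young's inequality).

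You are right, and the paper is too quick, about the dispersive lower bound for $n\geq 4$. Under the $T^n_{\mathcal{NF}}$ restriction ($|k_1|\gg\max_{j\geq 2}|k_j|$, $\tilde k_2:=k_2+\cdots+k_n\neq 0$), Cases B and C of Proposition~\ref{pro:disp} are indeed excluded, but Case D is not, so Proposition~\ref{pro:disp} alone does not force Case A. And Case D really does occur in the high-low region: for $n=4$ take $k_2=k_3=1-N_0$, $k_4=2N_0-1$, so $\tilde k_2=1$ and $H_4=3k_1^2+3k_1-6N_0^3+6N_0^2$; choosing $N_0\approx(k_1^2/2)^{1/3}\ll k_1$ gives $|H_4|\sim k_1^{4/3}\ll k_1^2$ while $\max_{j\geq 2}|k_j|\sim k_1^{2/3}\ll k_1$. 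So identifying this as a genuine obstacle in the paper's proof is correct.

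Where your proposal falls short is the Case D repair: you assert that the surplus weight $\prod_{j\geq 2}\langle k_j\rangle^{\frac12+\varepsilon}\gtrsim k_1^{1+2\varepsilon}$ (which does follow from $k_{\max_3}^2 k_{\max_4}\gtrsim k_1^2$) "can absorb the lost smoothing," but this does not close the estimate without further input. The multiplier loss in Case D is $k_1^2/|H_n|$, and the surplus weight only recovers $k_1^{1+2\varepsilon}$; these match only if one also knows $|H_n|\gtrsim k_1^{1-2\varepsilon}$ in the Case D/high-low region. No such lower bound appears in Proposition~\ref{pro:disp}, and it is not immediate: the example above gives $|H_n|\sim k_1^{4/3}$, which would be fine, but ruling out much smaller values of $|H_n|$ (down to $O(1)$) for all admissible integer configurations is a nontrivial Diophantine question that neither the paper's proof nor your sketch addresses. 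So you have correctly located the gap, but the proposed Cauchy--Schwarz substitute does not yet fill it.
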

\begin{proof}
Under the frequency restriction in $T^n_{\mathcal{NF}}$, we must have that $H_n \gtrsim k_1^2 \sim k^2$ according to Proposition~\ref{pro:disp}.  So
\begin{align*}
\n{T^n_{\mathcal{NF}} (u,v, \cdots, v)}{H^{s+1}_0} &\lesssim \n{  \sum_{\tiny\begin{array}{c}k_1+\cdots k_n = k\\ k_1 \gg \max\{|k_2|,\cdots,|k_n|\}\end{array}}\f{k |k_1|^{s+1}}{H_n(k_1,\cdots, k_n)} u_{k_1} \prod_{j=2}^n v_{k_j}}{l^2_k}\\
&\lesssim \n{  \sum_{k_1+\cdots k_n = k}|k_1|^s u_{k_1} \prod_{j=1}^n v_{k_j}}{l^2_k}\\
	&\sim \n{|k_1|^s u_{k_1}}{l^2_{k_1}} \prod_{j=2}^n \n{v_{k_j}}{l^1_{k_j}}
	\lesssim_{\ve} \n{u}{H^s}\n{v}{H^{\f{1}{2}+\ve}}^{n-1}, 
\end{align*}
which proves the desired claim.
\end{proof}
Next we obtain a smoothing estimate for the first two terms on the RHS of \eqref{eq:w1}.
\begin{lemma} 
For $s>\f{1}{2}$  and $0<\ga <\min(1,2s-1)$, there exists an $\ve>0$ such that for any $0< T \ll 1$,
\[
\n{\rr^2[u]}{Z^{s+\ga}} + \n{\mathcal{HH}[u]}{Z^{s+\ga}} \lesssim_\ve T^{\ve} \n{u}{Y^s} \left(\n{u}{Y^{\min(s,1)}}^{n-1}+ \n{u}{Y^{\min(s,1)}}^{m-1}\right).
\]
\end{lemma}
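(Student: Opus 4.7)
The plan is to apply Proposition~\ref{pro:main} with $s_0=s+\ga$, $s_1=s$, $s_2=\min(s,1)$, mirroring the case-by-case template from Lemma~\ref{le:lwp}. Since both $\rr^2[u]$ and $\mathcal{HH}[u]$ carry the derivative symbol $\si=ik$, I need to verify one of conditions~\eqref{eq:prop1}, \eqref{eq:prop2} on the relevant portions of the frequency domain dictated by Proposition~\ref{pro:disp}. Case~B of that proposition is automatically excluded: $\mathcal{HH}$ is non-resonant by construction, and in $\rr^2$ the combinatorial structure (at least two internal frequencies equal to $k$, the remaining indices summing to $-k$) forces $k_{\max_3}\gtrsim k$, pushing this term into Case~C exactly as noted in Lemma~\ref{le:lwp}.

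For the $\rr^2[u]$ contribution I then invoke \eqref{eq:prop2}. The three top frequencies satisfy $k_{\max_1},k_{\max_2},k_{\max_3}\gtrsim k$, so the ratio is controlled by $|k|^{s+\ga+1}/k^{s-\ve+2\min(s,1)}$. This is $O(1)$ whenever $\ga+1\leq 2\min(s,1)-\ve$, which reads $\ga\leq 1-\ve$ for $s\geq 1$ and $\ga\leq 2s-1-\ve$ for $\f{1}{2}<s<1$, both consistent with the hypothesis $\ga<\min(1,2s-1)$ after fixing $\ve>0$ sufficiently small.

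For $\mathcal{HH}[u]$ I split according to the remaining cases of Proposition~\ref{pro:disp}, exploiting the high-high structure $k_{\max_1}\sim k_{\max_2}\sim k_{\max}$. In Case~A, where $H_n\gtrsim k_{\max}^2$, condition~\eqref{eq:prop1} reduces to estimating $|k|^{s+\ga+1}/k_{\max}^{1+s+\min(s,1)}$; using $|k|\leq k_{\max}$ this is $O(1)$ as soon as $\ga\leq\min(s,1)$, which follows from $\ga<\min(1,2s-1)\leq\min(s,1)$. In Case~C, $k_{\max_3}\gtrsim k$ brings the estimate in line with the $\rr^2$ analysis. In Case~D, $k_{\max_3}^2 k_{\max_4}\gtrsim k_{\max}^2$ combined with $k_{\max_3}\leq k_{\max_1}=k_{\max}$ gives $k_{\max_3}k_{\max_4}\gtrsim k_{\max}$, so the denominator in \eqref{eq:prop2} is again $\gtrsim k_{\max}^{s-\ve+2\min(s,1)}$ and the same numerology applies. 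The factor $T^\ve$ is picked up automatically from the time-localization built into Proposition~\ref{pro:main}, and the output landing in $Z^{s+\ga}$ with the required norm structure is exactly what that proposition delivers.

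The main obstacle is the sub-$H^1$ regime $\f{1}{2}<s<1$, particularly for $\mathcal{HH}$ in Case~A: there $\lan{H_n}^{1/2}\sim k_{\max}$ supplies only one derivative, so the entire remaining $s+\ga$ derivatives must be extracted from the two high-frequency inputs contributing $k_{\max_1}^s k_{\max_2}^s\sim k_{\max}^{2s}$. This leaves a net smoothing gain of only $2s-1$, which is precisely the binding constraint producing the restriction $\ga<\min(1,2s-1)$ in the statement; every other sub-case admits more generous slack.
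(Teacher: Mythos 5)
Your proof is correct and follows the same route as the paper: apply Proposition~\ref{pro:main} with $s_0=s+\ga$, $s_1=s$, $s_2=\min(s,1)$, then split according to Proposition~\ref{pro:disp}; the paper likewise groups Cases~B, C, D into a single application of \eqref{eq:prop2} with the numerology $\ga<2\min(s,1)-1=\min(2s-1,1)$. One quibble with your closing commentary only: the binding constraint does not actually arise from $\mathcal{HH}$ in Case~A. By your own earlier computation, Case~A (using the dispersive weight via \eqref{eq:prop1}) only forces $\ga\leq\min(s,1)$, which for $\tfrac12<s<1$ reads $\ga\leq s$ and is strictly weaker than $\ga<2s-1$. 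The genuinely binding restriction comes from the non-dispersive condition \eqref{eq:prop2} in the $\rr^2$ estimate and in Cases~C/D of $\mathcal{HH}$, where only three factors $\gtrsim|k|$ appear in the denominator; that is where $\ga<\min(2s-1,1)$ is forced.
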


\begin{proof}
This proof is a slight modification of the proof for Lemma~\ref{le:lwp} with $s_0 = s+\ga$, $s_1 = s$ and $s_2 = \min(s,1)$.  The symbol $\si$ for this estimate is again $\si = i k$ as in Lemma~\ref{le:lwp}.   As before, we will go through all cases of Proposition~\ref{pro:disp} and then apply appropriate conditions of Proposition~\ref{pro:main}.\\

[Case A]  Here, we have $H_n \gtrsim k_{\max}^2$.  Since both $\rr^2$ and $\mathcal{HH}$ has at least two internal frequencies comparable to the outside frequency~$k$, we can apply \eqref{eq:prop2} by observing
\[
\f{|k|^{s+\ga} |i k|}{|H_n|^{\f{1}{2}} |k_{\max_1}|^s |k_{\max_2}|^{\min(s,1)}} \lesssim   |k|^{\ga-\min(s,1)} = O(1)
\]
which is true as long as $\ga < \min(s,1)$  Since $\min(2s-1, 1) \leq \min(s,1)$ for any $s$, this condition is met from our assumption.\\

[Case B, C and D] Similar to the proof of Lemma~\ref{le:lwp}, the condition \eqref{eq:prop2} is written as 
\[
\f{ |k|^{s+\ga} |ik|}{k_{\max_1}^{s-\ve} \left(k_{\max_2} k_{\max_3} k_{\max_4}\right)^{\min(s,1)}} \lesssim |k|^{\ga +1  +\ve - 2 \min(s,1)}= O(1)
\] 
which holds as long as $\ga  < 2\min(s,1) - 1 = \min(2s-1,1)$.  This proves the desired result.
\end{proof}
For the remaining estimates, we will need an asymmetrical version of Proposition~\ref{pro:main}, which is given below.
\begin{proposition}
	\label{pro:main2}
Let $n\geq 2$ and $0< T\ll 1$.  Here, we only consider multi-linear operators $T^n_\si(u,v,\cdots, v)$ with has a frequency restriction $k_1 \sim k$.
\[
\n{T^n_\si (u,v, \cdots, v)}{Z^{s_0}_T} \lesssim_{\ve, n} T^{\ve} \n{u}{Y^{s_1}}\n{v}{Y^{s_2}}^{n-1}
\]
for some $s_0,s_1, s_2 > \f{1}{2}$ where $u := u(t) = \eta (t/T) u(t)$ for a smooth cut-off function $\eta$.  The inequality above is satisfied if either of the following conditions is met
\begin{itemize}
\item If dispersion weight $H_n$ is used, then we need
\begin{equation}\label{eq:prop1a}
\sup_{(k_1, \cdots, k_n) \in \Om_k} \f{ |k|^{s_0-s_1} |\si|(k_1,\cdots, k_n)}{ \lan{H_n}^{\f{1}{2}} k_{\max_2}^{s_2}}  = O(1)
\end{equation}
where $k_{\max_2} = \max\{|k_2|,\cdots, |k_n|\}$. \\

\item If dispersion weight $H_n$ is not used, then we need
\begin{equation}\label{eq:prop2a}
\sup_{(k_1, \cdots, k_n) \in \Om_k} \f{ |k|^{s_0-s_1+\ve} |\si|(k_1,\cdots, k_n)}{  \left( k_{\max_2} k_{\max_3} k_{\max_4}\right)^{s_{2}}}  = O(1).
\end{equation}
We use the convention that $k_{\max_j} = 1$ for $j>n$.
\end{itemize}

\end{proposition}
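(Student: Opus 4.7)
The plan is to mimic the proof of Proposition~\ref{pro:main} directly, skipping the final symmetrization step (Lemma~\ref{le:asym}), since the inputs are no longer interchangeable. The frequency restriction $k_1 \sim k$ is precisely what makes this possible: using $|k|^{s_1} \sim |k_1|^{s_1}$, the hypothesis \eqref{eq:prop1a} rearranges to
\[
|\si|\lesssim \lan{H_n}^{\f{1}{2}}\cdot \f{|k_1|^{s_1}\, k_{\max_2}^{s_2}}{|k|^{s_0}},
\]
which is exactly the weight distribution needed to peel off one factor of $\n{z}{X^{-s_0,\f{1}{2}}}$, one factor of $\n{u}{Y^{s_1}}$, and the $D^{s_2}_x v$-factor controlling the largest of the remaining inputs.

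First I would reduce $\n{\cdot}{Z^{s_0}}$ to its two defining pieces: $\n{\cdot}{X^{s_0,-\f{1}{2}}}$ and the weighted $\ell^2_k L^1_\tau$ norm with weight $\lan{\tau-k^3}^{-1}$. For the $X^{s_0,-\f{1}{2}}$ piece, I dualize against $z\in X^{-s_0,\f{1}{2}}$, apply Plancherel to convert the pairing into a multilinear integral against $\wh{z_k}\wh{u_{k_1}}\prod_{j\geq 2}\wh{v_{k_j}}\,d\Ga$, invoke $\lan{H_n}^{\f{1}{2}}\lesssim \sum_{j=0}^n\lan{\tau_j-k_j^3}^{\f{1}{2}}$, and redistribute using the display above. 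A single H\"older step with $L^2_{t,x}$ on the factor absorbing the Bourgain weight, $L^4_{t,x}$ on $D^{s_1}_x u$ and $D^{s_2}_x v_{\max_2}$, and $L^\infty_{t,x}$ on the $n-2$ remaining copies of $v$, then reduces matters to the $L^4$-Strichartz $X^{s,\f{1}{3}}\hookrightarrow L^4_{t,x}$ and the embedding $Y^{\f{1}{2}+\ve}\hookrightarrow L^\infty_{t,x}$; the $X^{s,\f{1}{3}}$ slack supplies a positive power of $T$ by time localization.

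For the $\ell^2_k L^1_\tau$ piece I follow the dichotomy from Proposition~\ref{pro:main}: on the set $\lan{\tau-k^3}\sim H_n$ the uniform $L^2_\tau$-bound on $\lan{\tau-k^3}^{-\f{1}{2}}\chi_{H_n}$ reduces matters to the $X^{s_0,-\f{1}{2}}$ bound already obtained; on the complement one of the internal weights $\lan{\tau_j-k_j^3}^{\f{1}{2}}$ ($j\geq 1$) dominates $H_n^{\f{1}{2}}$, and Cauchy-Schwarz reduces the estimate to an $X^{s_0,-\f{1}{3}}$ bound, handled by the same H\"older scheme with $L^4_{t,x}$ on the dual variable, $L^2_{t,x}$ on the factor carrying the absorbed Bourgain weight, and $L^\infty_{t,x}$ on the rest. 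The no-dispersion condition~\eqref{eq:prop2a} is treated identically, using the $L^5_{t,x}$-Strichartz on the four large-frequency inputs for the $X^{s_0,-\f{1}{2}}$ side and the $L^{16/3}_{t,x}$-Strichartz in place of $L^5_{t,x}$ on the $\ell^2_k L^1_\tau$-side.

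The main thing to verify is that the asymmetric H\"older bookkeeping really goes through without any symmetrization: one must check that in every branch the Bourgain weight $\lan{\tau_j-k_j^3}^{\f{1}{2}}$ can be placed on a variable that still has spare regularity to afford the $L^2_{t,x}$ estimate, and that the output derivative $|k|^{s_0}$ can always be absorbed entirely by $z$ without generating a rogue power of $|k_1|$. This is direct because $s_1,s_2>\f{1}{2}$ leaves room in every $X^{s,b}$ factor, and the constraint $k_1\sim k$ forbids $|k_1|^{s_1}$ from competing with $|k|^{s_0}$ on the output side. All other estimates then follow the routine H\"older/Strichartz template used in Proposition~\ref{pro:main}.
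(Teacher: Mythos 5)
Your proposal follows exactly the route the paper itself takes: the paper explicitly dispenses with the proof by stating it is identical to that of Proposition~\ref{pro:main} with the index rearrangement applied only to $|k_2|\geq |k_3|\geq\cdots$, and your observation that $k_1\sim k$ makes the weight redistribution go through unchanged (so the symmetrization of Lemma~\ref{le:asym} is unnecessary) is precisely the point. The H\"older/Strichartz bookkeeping you describe, including the $\lan{\tau-k^3}\sim H_n$ dichotomy for the $\ell^2_k L^1_\tau$ piece and the $L^5$/$L^{16/3}$ exponents for condition~\eqref{eq:prop2a}, matches the paper's template.
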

Proof of this proposition is identical to Proposition~\ref{pro:main}, except that rearrangement of indices occurs for $|k_2|\geq |k_3|\geq \cdots$.  We omit the details.
Now we are ready to we estimate the two terms in \eqref{eq:w2}.
\begin{lemma} 
For $s>\f{1}{2}$, $n\geq 2$  and any $\ga \in \br$, there exists an $\ve>0$ such that for any $0< T \ll 1$,
\[
\n{ \bb^n[w, u, \cdots, u]}{Z^{s+\ga}} \lesssim_\ve T^{\ve} \n{w}{Y^{s+\ga}} \left(\n{u}{Y^{\f{1}{2}+\ve}}^{n-1}+\n{u}{Y^{\f{1}{2}+\ve}}^{m-1}\right).
\]
\end{lemma}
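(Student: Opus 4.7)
The plan is to interpret $\bb^n[w,u,\ldots,u]$ as a multi-linear Fourier multiplier $T^n_\si$ with symbol $\si(k_1,\ldots,k_n) = ik$ on the frequency domain
\[
\Om_k = \{(k_1,\ldots,k_n) \in (\bz^*)^n : |k_1| \gg \max_{j\geq 2}|k_j|,\ \ts\sum_{j\geq 2} k_j \neq 0,\ \ts\sum k_j = k\},
\]
and then invoke the asymmetric tool Proposition~\ref{pro:main2} with $w$ in the first (high-frequency) slot.

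First I observe that on $\Om_k$ the strict high-low separation forces $k_1 \sim k$, so $|k_1| = k_{\max_1}$ and $|k_j| \ll |k|$ for $j \geq 2$. Next I appeal to Proposition~\ref{pro:disp}: since $k_1$ is strictly larger than every other $|k_j|$, Case B (resonance $k_{j_0}=k$) and Case C ($k_{\max_3}\gtrsim k$) are both ruled out, and Case D is similarly excluded because $k_{\max_3}\ll k$ forces $k_{\max_3}^2 k_{\max_4}\ll k^2 \sim k_{\max}^2$. Hence we are in Case A, which provides the dispersive gain $\lan{H_n}^{1/2} \gtrsim k_{\max} \sim |k|$.

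Now I apply Proposition~\ref{pro:main2} with $s_0 = s+\ga$ (norm on LHS), $s_1 = s+\ga$ (norm on $w$), and $s_2 = \f{1}{2}+\ve$ (norm on each $u$). Since $s>\f{1}{2}$, all three exceed $\f{1}{2}$, and the condition \eqref{eq:prop1a} with the dispersive weight $H_n$ engaged becomes
\[
\sup_{\Om_k} \f{|k|^{(s+\ga)-(s+\ga)}\,|ik|}{\lan{H_n}^{1/2}\, k_{\max_2}^{\f{1}{2}+\ve}} \lesssim \f{|k|}{|k|\, k_{\max_2}^{\f{1}{2}+\ve}} = k_{\max_2}^{-\f{1}{2}-\ve} = O(1).
\]
The proposition then delivers
\[
\n{\bb^n[w,u,\ldots,u]}{Z^{s+\ga}} \lesssim_\ve T^\ve \n{w}{Y^{s+\ga}} \n{u}{Y^{\f{1}{2}+\ve}}^{n-1},
\]
which is dominated by the stated RHS.

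The essential structural observation is that all of the $s+\ga$ derivative budget can be dumped onto the single high-frequency input $w$ (because $k_1\sim k$), while the one derivative coming from the symbol $\si = ik$ is exactly absorbed by the dispersive gain $\lan{H_n}^{1/2}\sim|k|$. Consequently there is no genuine obstacle here, and in particular none of the restrictions $\ga<\min(2s-1,1)$ that constrained the earlier resonant/high-high estimates appear; the bound holds for every $\ga\in\br$. The only item that requires care is matching the asymmetric Proposition~\ref{pro:main2} to the signature of $\bb^n$, but that matching is immediate since the frequency restriction defining $\bb^n$ is precisely the one under which Proposition~\ref{pro:main2} is formulated.
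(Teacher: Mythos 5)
Your identification of the symbol $\si = ik$, the parameters $s_0 = s_1 = s+\ga$, $s_2 = \tfrac12+\ve$, and the application of Proposition~\ref{pro:main2} are all correct, as is the Case~A computation. The exclusions of Cases~B and~C are also sound: the $\bb^n$ restriction $|k_1|\gg\max_{j\geq 2}|k_j|$ together with $\sum_{j\geq 2}k_j\neq 0$ forbids any resonance $k_{j_0}=k$ and forces $k_{\max_3}\ll |k|$.

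The gap is in your assertion that Case~D is ``similarly excluded.'' From $k_{\max_3}\ll|k|$ and $k_{\max_4}\ll|k|$ one cannot conclude $k_{\max_3}^2k_{\max_4}\ll|k|^2$: taking $k_{\max_3}\sim k_{\max_4}\sim |k|^{2/3}$ satisfies $k_{\max_j}\ll|k|$ while giving $k_{\max_3}^2k_{\max_4}\sim|k|^2$. And this is not vacuous for $\bb^n$: for $n\geq 4$ the $\bb^n$ region genuinely contains tuples where $|H_n|\ll k_{\max}^2$, so Case~A fails. A concrete family is $k_2=k_3=-N$, $k_4=2N+1$, $k_1\approx\sqrt{2}\,N^{3/2}$ (so $\wt{k_2}=1$ and $|k_j|\ll|k_1|$ for $j\geq 2$): the leading contribution $3k_1^2\wt{k_2}=3k_1^2$ to $H_4$ nearly cancels against $-3k_2k_3k_4\approx -3\cdot 2N^3$, leaving $|H_4|$ of lower order. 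By Proposition~\ref{pro:disp}, on that portion of the domain Case~D is the operative alternative, so claiming ``hence we are in Case~A'' is unjustified and your estimate does not cover the full frequency region.

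The repair is short and is exactly what the paper does: in Case~D use condition~\eqref{eq:prop2a} instead of~\eqref{eq:prop1a}. Since $k_{\max_2}k_{\max_3}k_{\max_4}\geq k_{\max_3}^2k_{\max_4}\gtrsim k_{\max}^2\sim|k|^2$, one has
\[
\f{|k|^{(s+\ga)-(s+\ga)+\ve}\,|ik|}{\left(k_{\max_2}k_{\max_3}k_{\max_4}\right)^{\f{1}{2}+\ve}} \lesssim \f{|k|^{1+\ve}}{|k|^{1+2\ve}} = |k|^{-\ve} = O(1),
\]
valid for every $\ve>0$ and $\ga\in\br$, so the final statement of the lemma is unchanged. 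You need to add this Case~D branch to complete the argument.
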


\begin{proof}
We apply Proposition~\ref{pro:main2} with$\si = i k$, $s_0 = s_1 = s+\ga$ and $s_2 = \f{1}{2}+\ve$.  By construction of $\bb^n$, there cannot be frequency interactions of the form $B$ or $C$ of Proposition~\ref{pro:disp}, so we only need to consider Cases A and D.\\

[Case A]  Here $H_n \gtrsim k_1^2\sim k$, so condition~\eqref{eq:prop1a} is
\[
\f{|ik|}{|H_n|^{\f{1}{2}}} = O(1),
\]
which holds for any $s, \ga \in \br$.\\

[Case D]  This applies only for $n\geq 4$, where the condition~\eqref{eq:prop2a} becomes
\[
\f{|ik|}{ \left(k_{\max_2} k_{\max_3} k_{\max_4}\right)^{\f{1}{2}+\ve}} \lesssim |k|^{-\ve} = O(1)
\]
which holds for any $\ve >0$.
\end{proof}
Next, we estimate the normal form remainder terms in \eqref{eq:w3} and \eqref{eq:w4}.
\begin{lemma} \label{le:2n}
Let $n\geq 2$.  For $s>\f{1}{2}$ and $\ga <\min(s,1)$, there exists an $\ve>0$ such that for any $0< T \ll 1$,
\[
\n{T^n_{\mathcal{NF}} (e^{t\p_x^3} f, u,\cdots, u, \rr^2 [u]+ \nr[u]) }{Z^{s+\ga}} \lesssim_\ve T^{\ve} \n{f}{H^s} \left(\n{u}{Y^{\min (s,1)}}^{2n-2}+\n{u}{Y^{\min(s,1)}}^{n+m-2}\right).
\]
\end{lemma}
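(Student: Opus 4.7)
The plan is to expand the inner nonlinearity $\rr^2[u]+\nr[u]$ explicitly as a polynomial in $u$ so that the whole expression $T^n_{\mathcal{NF}}(e^{t\p_x^3}f,u,\ldots,u,\rr^2[u]+\nr[u])$ becomes an $(n+p-1)$-multilinear form in $(f,u,\ldots,u)$, where $p\in\{n,m\}$ is the degree of the inner contribution being tracked. I treat the $p=n$ contribution from $au^n$; the $p=m$ case is parallel. On the Fourier side, the combined symbol reads
\[
\sigma_{\text{big}}(k_1,\ldots,k_{n-1},l_1,\ldots,l_p)=\frac{ik\cdot ik_n}{H_n(k_1,\ldots,k_n)}\,\sigma_{\text{in}},\qquad k_n:=l_1+\cdots+l_p,
\]
where $\sigma_{\text{in}}$ carries the restriction to the resonant/non-resonant inner sub-domain.

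The normal-form cutoff $k_1\gg\max(|k_2|,\ldots,|k_{n-1}|,|k_n|)$ places the outer $n$-tuple in Case A of Proposition~\ref{pro:disp}, giving $|H_n|\gtrsim k_1^2\sim k^2$ and hence the pointwise bound $|\sigma_{\text{big}}|\lesssim |k_n|/|k|$. I would then apply a direct variant of Proposition~\ref{pro:main2} suited to this asymmetric input pattern, with $s_0=s+\gamma$, treating $\hat{f}_{k_1}$ as the special input at regularity $s$ and each of the remaining $n+p-2$ factors of $\hat{u}$ at regularity $\min(s,1)$. Since the dispersive weight $H_n$ controls only a subset of the $n+p-1$ frequencies, the natural criterion is the non-dispersive one \eqref{eq:prop2a}, which reduces to
\[
\frac{|k|^{\gamma+\varepsilon-1}\,|k_n|}{\bigl(k^{\text{big}}_{\max_2}\,k^{\text{big}}_{\max_3}\,k^{\text{big}}_{\max_4}\bigr)^{\min(s,1)}}=O(1),
\]
where $k^{\text{big}}_{\max_j}$ is the $j$-th largest frequency among $\{|k_2|,\ldots,|k_{n-1}|,|l_1|,\ldots,|l_p|\}$.

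This criterion is verified case-by-case according to the structure of the inner nonlinearity. For $\rr^2$, at least two of the $l_r$ equal $k_n$ and the rest sum to $-k_n$, so three of the top inner frequencies are at least $|k_n|$, and the bound follows from $|k_n|\leq |k|$ combined with $\gamma<\min(s,1)$. For the $\bb^n$-portion of $\nr$, the inner sum is high-low, one $l_r$ dominates with $|l_r|\sim|k_n|$, and we combine the outer normal-form gain with the remaining middle $u$-factors. For the $\mathcal{HH}$-portion of $\nr$, two inner frequencies are comparable to $L:=\max|l_r|$; when $L\lesssim k$ the analysis mirrors the previous cases, while when $L\gg k$ the two large cancelling inner factors both carry frequency $L$, providing $L^{-2\min(s,1)}$ in the denominator, which more than absorbs $|k|^{\gamma+\varepsilon}\lesssim L^{\gamma+\varepsilon}$ as long as $\gamma<\min(s,1)$.

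The main obstacle is the $\mathcal{HH}$ subcase with $L\gg k$: although the combined symbol is small thanks to $|\sigma_{\text{big}}|\lesssim |k_n|/|k|\ll 1$, one must trace how the high-high cancellation in the inner sum distributes the $|k|^\gamma$ derivative loss onto the two large cancelling $u$-factors without exceeding their $H^{\min(s,1)}$ budget. This is accomplished by choosing $\varepsilon>0$ suitably small and exploiting the identity $|H_n|\gtrsim k_1^2$ to recover two full derivatives of gain, so that the remaining cases follow the scheme already used in Lemma~\ref{le:lwp}, with the normal-form factor $k/H_n$ playing the role of the outer dispersion weight $|H_n|^{-1/2}$.
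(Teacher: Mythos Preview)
Your approach is correct and yields the same range $\gamma<\min(s,1)$, but it is organized differently from the paper. The paper treats the whole expression as an $(n+m-1)$-linear form and runs the Case~A/B/C/D dichotomy of Proposition~\ref{pro:disp} on the \emph{full} tuple $(k_1,\ldots,k_{n+m-1})$: in Case~A it invokes the dispersive criterion~\eqref{eq:prop1a} with the full weight $H_{n+m-1}$ (together with the crude bound $|\sigma|=O(1)$), while in Cases~B--D it uses the non-dispersive criterion~\eqref{eq:prop2a}, extracting in the resonant sub-case $k_1=k$ the additional observation that some outer $k_j$ must dominate $|k_n+\cdots+k_{n+m-1}|$. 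By contrast, you never appeal to $H_{n+m-1}$: you work exclusively with the non-dispersive criterion~\eqref{eq:prop2a} and the sharper pointwise bound $|\sigma|\lesssim |k_n|/|k|$, then split according to the structure of the \emph{inner} nonlinearity ($\rr^2$, $\bb$, $\mathcal{HH}$ with $L\lesssim k$, $\mathcal{HH}$ with $L\gg k$). This works because whenever an inner frequency is large it automatically appears among $k^{\text{big}}_{\max_2},k^{\text{big}}_{\max_3}$, and whenever all inner frequencies are small so is $|k_n|$; either way the factor $|k|^{\gamma+\ve-1}|k_n|$ is absorbed under $\gamma<\min(s,1)$.

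One clarification: when you write ``the dispersive weight $H_n$ controls only a subset of the $n+p-1$ frequencies, so the natural criterion is the non-dispersive one'', note that the symbol $H_n$ in criterion~\eqref{eq:prop1a} of Proposition~\ref{pro:main2} refers to the dispersion of the \emph{entire} multilinear form being estimated---here $H_{n+p-1}$---not the outer normal-form denominator. So the dispersive route is available (and is exactly what the paper uses in its Case~A). Your decision to bypass it is legitimate, but the stated justification conflates the two $H$'s. The paper's route is cleaner in that it is a direct application of the existing machinery; yours is a bit more hands-on but avoids any appeal to the large modulation $H_{n+p-1}$.
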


\begin{proof}
There are two estimates to consider here: (1) when $\rr^2$ or $\nr$ contains the $n$~th degree nonlinearity, (2) when $\rr^2$ or $\nr$ contains the $m$~th degree nonlinearity.  But essentially, we can deal with them together here, since we can set $m=n$ later if needed.  So we assume this estimate to be of the second kind mentioned.  Note that this is an $n+m-1$~multi-linear estimate.

Again, we use Proposition~\ref{pro:main2} with $s_0 = s+\ga$, $s_1 = s$, $s_2 = \f{1}{2}+\ve$ and the symbol 
\[
\si =\displaystyle  \f{ik\,(k_n +\cdots + k_{n+m-1}) }{H_n(k_1, \cdots, k_{n-1}, k_n +\cdots + k_{n+m-1}) }.
\]
Note that the denominator of this symbol $H_n$ should be distinguished by the expression $H_{n+m-1}$ of the condition~\ref{pro:main2}.  By construction, we know that $H_n \gtrsim k_1^2 \sim k$ as well as $|k_n +\cdots + k_{n+m-1}| \ll |k_1|$.  Thus we have $\si = O(1)$. Now we go through the cases:\\

[Case A] Here, we have $H_{n+m-1}\gtrsim k_{\max}^2$ and $\si = O(1)$.  So using the condition \eqref{eq:prop1a}
\[
\f{|k|^{\ga} |\si| }{|H_{n+m-1}|^{\f{1}{2}}}\lesssim |k|^{\ga -1} = O(1)
\]
which holds as long as $\ga \leq 1$.\\

[Case B] Recall that the operator $T_{\mathcal{NF}}^n$ comes with the frequency-restriction:
\[
|k_1| \gg \max (|k_2|, \cdots, |k_{n-1}|, |k_n +\cdots + k_{n+m-1}|).
\]
Then, in order to be in Case B, we must have either $k_1 = k$ or $k_{j_0} = k$ for $j_0 \in\{ n, \cdots, n+m-1\}$.  In the former scenario, we have
\[
k_2 + \cdots + k_{n+m-1} = 0 \implies \max \{|k_2|, \cdots, |k_{n-1}|\} \gtrsim  |k_n +\cdots + k_{n+m-1}|.
\]
Without loss of generality, say $k_2\gtrsim k_n +\cdots k_{n+m-1}$.  Then since $H_n$ in the denominator gives a gain of $k_1^2$, we use the condition \eqref{eq:prop2a} to obtain
\[
\f{|k|^{\ga+\ve} |\si|}{|k_2|^{\min(s,1)}} = \f{|k|^{1+\ga+\ve} \,|k_n +\cdots, + k_{n+m-1}| }{H_n |k_2|^{\min(s,1)}} \lesssim |k_1|^{\ga-1 +\ve} k_2^{1 -\min (s,1)}. 
\]
For $\ga < 1$, this is controlled by $|k_1|^{\ga - \min (s,1)}$ which is uniformly bounded if $\ga < \min (s,1)$.

In the latter scenario, say without loss of generality that $k_{n+m-1} = k$. Then,
\[
k_1 + \cdots + k_{n+m-2} = 0 \implies \max\{ |k_2|, \cdots, |k_{n+m-2}|\} \gtrsim |k_1| \sim |k|.
\]
Without loss of generality, let $k_2 \gtrsim k_1 \sim k$.   Then using the criterion \eqref{eq:prop2a}, 
\[
\f{|k|^{\ga+\ve} |\si|}{(|k_2| |k_{n+m-1}|)^{\min(s,1)}} \lesssim |k|^{\ga + \ve - 2\min(s,1)}     
\]
which is uniformly bounded if $\ga <\min(2s,2)$.\\

[Case C $\&$ D] In these cases,  \eqref{eq:prop2a} gives
\[
\f{|k|^{\ga+\ve} |\si|}{ (k_{\max_2} k_{\max_3}k_{\max_4})^{\min(s,1)}} \lesssim k_{\max}^{\ga+\ve -  2\min(s,1)}
\]
which leads to the same estimate as before.
Applying Proposition~\ref{pro:main} in each case, we obtain the desired estimate.
\end{proof}
Next, we deal with the two terms in equation of \eqref{eq:w5}.

\begin{lemma} \label{le:mix}
Let $n\geq 2$.  For $s>\f{1}{2}$  and $0<\ga <\min(1,3s-1)$, there exists an $\ve>0$ such that for any $0< T \ll 1$,
\[
\n{ \bb^n[T^n_{\mathcal{NF}} (e^{t\p_x^3} f, u,\cdots, u), u,\cdots, u]}{Z^{s+\ga}} \lesssim_\ve T^{\ve} \n{f}{H^s} \n{u}{Y^{\min(s,1)}}^{2n-2}.
\]
\end{lemma}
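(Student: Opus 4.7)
\emph{Setup.} The operator $\bb^n[T^n_{\mathcal{NF}}(\cdot,u,\ldots,u),u,\ldots,u]$ is a $(2n-1)$-linear map with one $f$-input and $2n-2$ identical $u$-inputs, whose symbol is
\[
\sigma(k_1, \ldots, k_{2n-1}) = \f{ik\, K}{H_n(k_1,\ldots,k_n)}, \qquad K := k_1 + \cdots + k_n.
\]
The combined high-low restrictions of the inner $T^n_{\mathcal{NF}}$ and the outer $\bb^n$ force $|k_1| \gg |k_j|$ for all $j\geq 2$, so $|k_1|\sim|K|\sim|k|$, and Proposition~\ref{pro:disp} applied to $H_n$ gives $|H_n|\gtrsim k_1^2$, hence $|\sigma|\lesssim 1$. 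My plan is to apply Proposition~\ref{pro:main2} with $s_0 = s+\gamma$, $s_1 = s$, $s_2 = \min(s,1)$, and case-split via Proposition~\ref{pro:disp} on the full $(2n-1)$-linear dispersion $H_{2n-1}$. In Case~A ($H_{2n-1}\gtrsim k^2$) the weight $|H_{2n-1}|^{1/2}$ in \eqref{eq:prop1a} absorbs $|k|^\gamma$ once $\gamma\leq 1$. Case~C ($k_{\max_3}\gtrsim k$) is vacuous because $k_{\max_3}\leq k_{\max_2}\ll|k|$. In Case~D ($k_{\max_3}^2 k_{\max_4}\gtrsim k^2$), monotonicity gives $k_{\max_2}k_{\max_3}k_{\max_4}\gtrsim k^2$, and \eqref{eq:prop2a} requires $\gamma<2\min(s,1)$, which is weaker than $3s-1$ whenever $s\leq 1$.

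\emph{The main obstacle --- Case~B.} Here $k_{j_0}=k$ for some $j_0$; the dominance $|k_1|\gg|k_j|$ ($j\geq 2$) forces $j_0=1$, so $k_1=k$ and $\sum_{j=2}^{2n-1}k_j=0$. Since $|\sigma|\sim 1$ pointwise is not enough to absorb $|k|^{\gamma+\ve}$, the key idea is to exploit the full symmetry in the $2n-2$ identical $u$-inputs by averaging the coefficient over the $(2n-2)!$ permutations of $\{k_2,\ldots,k_{2n-1}\}$. Up to a combinatorial constant this amounts to summing $\sigma_S := ik K_S/H_n^S$ over subsets $S\subset\{2,\ldots,2n-1\}$ of size $n-1$, where $K_S = k + a_S$, $a_S := \sum_{j\in S}k_j$, and $H_n^S = K_S^3 - k^3 - \sum_{j\in S}k_j^3$. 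Pairing each $S$ with its complement $S^c$, the identity $a_{S^c}=-a_S$ makes the leading terms $\pm i/(3a_S)$ cancel exactly, and direct expansion produces
\[
\sigma_S + \sigma_{S^c} = \f{i B_+}{9 k^2 a_S^2} + O\pr{\f{|a_S|}{k^2}}, \qquad B_+ := \sum_{j=2}^{2n-1}k_j^3.
\]
The decisive input is Newton's identity: from $\sum_{j\geq 2}k_j=0$ one gets $B_+ = 3\sum_{2\leq i<j<\ell\leq 2n-1} k_i k_j k_\ell$, and since every such triple is bounded by the product of the three largest values, $|B_+|\lesssim_n k_{\max_2}k_{\max_3}k_{\max_4}$. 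This yields $|\sigma_{\tn{sym}}|\lesssim k_{\max_2}k_{\max_3}k_{\max_4}/k^2$, after which \eqref{eq:prop2a} becomes
\[
\f{|k|^{\gamma+\ve}|\sigma_{\tn{sym}}|}{(k_{\max_2}k_{\max_3}k_{\max_4})^{\min(s,1)}} \lesssim |k|^{\gamma+\ve-2}(k_{\max_2}k_{\max_3}k_{\max_4})^{1-\min(s,1)} \lesssim |k|^{\gamma+\ve+1-3\min(s,1)},
\]
which is $O(1)$ precisely when $\gamma<3\min(s,1)-1$.

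\emph{Hardest point.} The delicate issue is justifying the symmetrization itself: the non-resonance constraint $a_S\neq 0$ is not invariant under arbitrary permutations of $\{k_2,\ldots,k_{2n-1}\}$, but it \emph{is} preserved under the pairing $S\leftrightarrow S^c$ exploited above (because $a_{S^c}=-a_S$), and one has to verify that the lower-order remainders of $\sigma_S+\sigma_{S^c}$ obey the same bound $k_{\max_2}k_{\max_3}k_{\max_4}/k^2$. Assembling the four cases then yields the claimed range $0<\gamma<\min(1,3s-1)$.
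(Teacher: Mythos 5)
Your symmetrization strategy is genuinely different in presentation from the paper's argument, and the pairing $S\leftrightarrow S^c$ is a nice way to package the key cancellation. Let me compare. The paper fixes $S_0=\{2,\dots,n\}$, writes $\si = (\si-\mu) + \mu$ with $\mu = i/(3a_{S_0})$, shows $|\si-\mu|\lesssim k_{\max_2}k_{\max_3}k_{\max_4}/k^2$ by an exact telescoping of $H_n$, and then proves $T_\mu^B\equiv 0$ by an odd-in-$a_{S_0}$ symmetry. Your pairing of $S$ with $S^c$ (using $a_{S^c}=-a_S$) achieves precisely the same cancellation: since $\mu_S := i/(3a_S)$ satisfies $\mu_S+\mu_{S^c}=0$, one has $\si_S+\si_{S^c} = (\si_S-\mu_S) + (\si_{S^c}-\mu_{S^c})$, so the bound you are after is \emph{equivalent} to the paper's bound on $\si-\mu$. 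The symmetrization does not bypass that algebra; it merely repackages where it must be done. The Newton-identity observation $|B_+|\lesssim_n k_{\max_2}k_{\max_3}k_{\max_4}$ is correct and that part of your argument is clean.

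The genuine gap is the claimed expansion $\si_S+\si_{S^c}=\frac{iB_+}{9k^2a_S^2}+O(|a_S|/k^2)$, which is not correct as stated. Computing exactly (with $K_S=k+a_S$, $P_S=\sum_{j\in S}k_j^3$, $E_S:=a_S^3-P_S$, $B_+=P_S+P_{S^c}$):
\begin{equation*}
\si_S+\si_{S^c} = ik\cdot\frac{-2a_S E_S - K_S B_+}{H_n^S\,H_n^{S^c}},
\end{equation*}
and since $H_n^S\sim 3k^2 a_S$ to leading order, the second correction to the $B_+$ term is $\sim \frac{2iE_S}{9k^3 a_S}$, not $O(|a_S|/k^2)$. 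A crude bound gives only $|E_S|\lesssim k_{\max_2}^3$, which would leave $|E_S/(k^3a_S)|\lesssim k_{\max_2}^3/(k^3|a_S|)$; this is not manifestly $\lesssim k_{\max_2}k_{\max_3}k_{\max_4}/k^2$ when $|a_S|$ is small and $k_{\max_4}$ is small. To make the bound go through one needs the structural refinement $|E_S|\lesssim k_{\max_2}k_{\max_3}|a_S| + k_{\max_3}^2 k_{\max_4}$, which is exactly what the paper's telescoping identity $H_{n-1}=3\sum_\ell x_\ell\wt{x}_\ell\wt{x}_{\ell+1}$ delivers --- the $\ell=1$ term carries the factor $\wt{x}_1=a_S$ explicitly. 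You flag this remainder issue yourself as ``the hardest point'' but leave it unresolved; filling it requires the same telescoping computation the paper performs for $\si-\mu$. Without it, the argument only yields a strictly smaller $\gamma$-range (roughly $\gamma<2\min(s,1)-1$ rather than $\gamma<3\min(s,1)-1$). So the strategy is sound and equivalent in spirit, but the lemma is not proved as written.
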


By a normal size-estimate, this estimate should fail.  But there is a cancellation structure here that can be utilized.  This cancellation structure has an explicit algebraic expression in case of mKdV ($n=3$) and was used to establish a well-posedness of periodic mKdV below $s=\f{1}{2}$ in  \cite{japanese}.  It is remarkable that analogous cancellation structure exists for gKdV even with a non-explicit algebraic expression.

\begin{proof}
Again, this is a $2n-1$~multi-linear estimate.  Symbol $\si$ can be put in the following form:
\[
\si = \f{ik (k_1 + \cdots+ k_n)}{H_n (k_1, \cdots, k_n)}.
\]
What makes this symbol worse that the similar-looking symbol from the previous lemma is that two derivatives in the numerator are both high-frequency.  So the trick used in case B of this estimate in Lemma~\ref{le:2n} does not work here.  In fact, condition~\ref{eq:prop2} fails for any $\ga >0$ in Case B of Proposition~\ref{pro:disp}.  Before we perform a size-estimate, we will first harvest cancellations from the worst term here.

Note that due to the frequency restriction of $\bb^n$ and $T^n_{\mathcal{NF}} $, we must have $k_1 \gg \max\{|k_2|, \cdots, |k_{2n-1}|\}$.  So the only possible cases of Proposition~\ref{pro:disp} are Case A and Case B.  Furthermore, Case B can occur when $k_2 + \cdots + k_{2n-1} = 0$ which means $k_1 = k$.  Consider this term with the frequency restriction given by:
\begin{equation}\label{eq:rest}
k_2 + \cdots + k_{2n-1} = 0, \qquad \max_{j\geq 2} |k_j| \ll k.
\end{equation}
We decompose this nonlinearity as follows:
\[
T^{2n-1}_\si = T_\si^B + T_\si^{A}
\]
where $T_\si^B$ is the $2n-1$~multi-linear Fourier multiplier with symbol $\si$ and frequency restrictions corresponding to \eqref{eq:rest}.  For $T_\si^{A}$, there will be enough smoothing to perform a direct size estimate.  For $T_{\si}^B$, we must first further decompose the symbol to take advantage of cancellation.

Note that we are guaranteed of $k_2 + \cdots + k_n \neq 0$ by construction of $T$.  Define
\[
\mu =  \f{i}{3(k_2 + \cdots + k_n)}.
\]
We will show that $T_{\si-\mu}^B$ is bounded using Proposition~\ref{pro:main2}, while $T_{\mu}^B$ cancels out entirely.  First, we estimate the size of $\si-\mu$ here.  Argument is made separately for $n=2$ (quadratic normal form), $n=3$ (cubic normal form) and $n\geq 4$.\\

\textbf{$\si -\mu$ estimate for $n=2$:}  This is the simplest case where $T_{\si-\mu}^B \equiv 0$.  Since $H_2 = 3k_1 k_2(k_1+k_2)\neq 0$, so we have
\[
\si - \mu = \f{ik}{3k_1 k_2} - \f{i}{3k_2} = \f{  i (k_2+ k_3) }{3k_1 k_2}  = 0
\]
since $k_2 + k_3 = 0$ by \eqref{eq:rest}.\\

\textbf{$\si -\mu$ estimate for $n=3$:}  Recall that $H_3 = 3 (k_1 + k_2) (k_2+ k_3)(k_3+k_1)\neq 0$.  Then
\begin{align*}
\si - \mu &= \f{ik(k_1 + k_2+k_3)}{3 (k_1+k_2)(k_2+ k_3)(k_3+k_1)} - \f{i}{3(k_2 + k_3)} \\
	&= \f{  i k_1 (k_1+k_2+k_3) - i (k_3+k_1)(k_1+k_2) }{3(k_1+k_2)(k_2 + k_3)(k_3+k_1)}\\
	& = \f{   - i k_2 k_3 }{3(k_1+k_2)(k_2 + k_3)(k_3+k_1)}\\
	&\lesssim \f{k_{\max_2} k_{\max_3}}{k_{1}^2}.
\end{align*}

\textbf{$\si -\mu$ estimate for $n\geq 4$:}  For this argument, we will use the notation $\wt{k_j} := k_j + \cdots + k_n$ similar to the one introduced during the proof of Proposition~\ref{pro:disp}.  However, note that the summation here stops at index $n$, where there are a total of $2n-1$ indices.
\[
\si - \mu =   \f{ik \wt{k_1} }{H_n} -\f{i}{3\wt{k_2}} = \f{  3i k_1 \wt{k_1}  \wt{k_2} - iH_n }{3\wt{k_2} H_n}.
\]
Now, recall from the \eqref{eq:hn} that $H_n$ is equal to 
\[
H_n = 3 k_1 \wt{k_1} \wt{k_2} + 3 k_{2} \wt{k_2} \wt{k_3} + \cdots + 3 k_{n-1} \wt{k_{n-1}} \wt{k_{n}},\quad .
\]
Combining this with above, we obtain
\[
\si - \mu= - 3i\f{k_{2} \wt{k_2} \wt{k_3} + \cdots +  k_{n-1} \wt{k_{n-1}} \wt{k_{n}}}{\wt{k_2} H_n} = -3i\f{k_2 \wt{k_3}}{H_n} -3i \f{\sum_{j=3}^n k_{j-1} \wt{k_{j-1}} \wt{k_j}}{\wt{k_2} H_n}.
\]
The numerator of the first fraction on RHS is bounded by $|k_{\max_2}| |k_{\max_3}|$,  which can be controlled by the denominator.  So the first fraction has size $|k_{1}|^{-1}$.  For the second fraction, we examine the numerator:
\begin{align*}
k_{j-1} \wt{k_{j-1}} \wt{k_j} &= k_{j-1} (\wt{k_2} - k_2 - \cdots - k_{j-2})   (k_j + \cdots + k_n)\\
&= \wt{k_2} k_{j-1} \wt{k_j} - \sum_{l < j-1} \sum_{m>j-1}  k_l k_{j-1}  k_m.   
\end{align*}
It is important to notice in this computation is that the first term contains $\wt{k_2}$,  which will cancel with the denominator, and the remaining term is a product of three non-overlapping frequencies.  Thus, we can write
\[
\si - \mu =  -3i\f{k_2 \wt{k_3}}{H_n} -3i \f{\sum_{j=3}^n k_{j-1}  \wt{k_j}}{H_n} - 3i\f{\sum_{j=3}^n \sum_{l<j-1} \sum_{m>j-1} k_l k_{j-1} k_m}{\wt{k_2} H_n}.
\]
Since $H_n \gtrsim k_1^2$, we have
\begin{equation}\label{eq:simu}
|\si - \mu| \lesssim \f{k_{\max_2} k_{\max_3} k_{\max_4} }{k_1^2}.
\end{equation}
We will use the above estimate for all $n\geq 3$.  Since this is a $2n-1$~multi-linear estimate, $k_{\max_4}$ still makes sense for $n=3$.   For $n=2$, we observed that the entire term $T_{\si-\mu}^B$ cancels out.  Now, we have
\[
 T^{2n-1}_\si = T_{\si}^{A} + T_{\si - \mu}^B + T_{\mu}^B.
\]
We will use Proposition~\ref{pro:main2} to estimate the first two terms.  Also, we will show that the last term cancels completely.\\

\textbf{Estimate for $T_{\si}^{A}$:}   By construction $H_n \gtrsim k_1^2$, and also since we are in Case A, $H_{2n-1} \gtrsim k_1^2$.  To satisfy \eqref{eq:prop1a}, 
\[
\f{|k|^{\ga} |ik| |k_1 + \cdots + k_n|}{|H_n| |H_{2n-1}|^{\f{1}{2}}}\lesssim |k_1|^{\ga-1} = O(1) 
\]
we need $\ga \leq 1$. This concludes the estimate for $T_{\si}^A$.\\

\textbf{Estimate $T_{\si - \mu}^B$:} We use \eqref{eq:prop2a} and \eqref{eq:simu} to obtain this estimate.  We need to control
\begin{align*}
\f{|k|^{\ga+\ve} |\si - \mu|}{  (k_{\max_2} k_{\max_3} k_{\max_4})^{\min(s,1)}} &\lesssim \f{|k|^{\ga+\ve}k_{\max_2} k_{\max_3} k_{\max_4}}{ k^2 ( k_{\max_2} k_{\max_3} k_{\max_4})^{\min(s,1)}} \\
&\lesssim |k_1|^{\ga - 2 + \ve} (k_{\max_2} k_{\max_3} k_{\max_4})^{1-\min(s,1)}\\
&\lesssim |k_1|^{\ga + 1 + \ve -3\min(s,1) }.
\end{align*}
Here we need $\ga < 3 \min (s,1) - 1 = \min (3s-1, 2)$.\\
 
\textbf{Cancellation of $T_{\mu}^B$:}  The $k$~th Fourier coefficient of $T_{\mu}^B$ is written as 
\[
e^{itk^3 }f_k e^{ik x} \sum_{\tiny\begin{array}{c} k_2 + \cdots + k_{2n-1} = 0\\ k \gg \max(k_2,\cdots, k_{2n-1})\\ k_2 + \cdots + k_n \neq 0 \end{array} }\f{i}{3(k_2 + \cdots + k_n)}  \prod_{j=2}^{2n-1} u_{k_j} .
\]
The sum above can be written as
\[
 \sum_{l\in \bz^* }\f{i}{3l} \sum_{\tiny\begin{array}{c} k \gg \max(k_2,\cdots, k_{n})\\ k_2 + \cdots + k_n = l \end{array} } \prod_{j=2}^{n} u_{k_j}  
 \sum_{\tiny\begin{array}{c} k \gg \max(k_{n+1},\cdots, k_{2n-1})\\ k_{n+1} + \cdots + k_{2n-1} = -l \end{array} } \prod_{j'=n+1}^{2n-1} u_{k_{j'}} .
 \]
Due to the symmetry in frequency indices $(k_2, \cdots, k_n)$ and $(k_{n+1}, k_{2n-1})$, this sum cancels out completely.  For instance, we can split the sum into $l>0$ and $l<0$ and observe that they cancel each other out.  Roughly speaking, this is equivalent to the calculation $\int_{\bt} (\p_x^{-1} g) g dx=0$, for appropriately chosen function $g: \hat{g}(0)=0$. Thus,  $T_{\mu}^B \equiv 0$ which completes the proof. 
\end{proof}
Finally, the next lemma deals with the mixed terms in \eqref{eq:w6}.
\begin{lemma} 
Let $n\neq m\geq 2$.  For $s>\f{1}{2}$  and $0<\ga <\min(1,3s-1)$, there exists an $\ve>0$ such that for any $0< T \ll 1$,
\[
\n{\bb^n[T^m_{\mathcal{NF}}  , u,\cdots, u]}{Z^{s+\ga}}+\n{\bb^m[T^n_{\mathcal{NF}} , u,\cdots, u]}{Z^{s+\ga}} \lesssim_\ve T^{\ve} \n{f}{H^s} \n{u}{Y^{\min(s,1)}}^{n+m-2}.
\]
\end{lemma}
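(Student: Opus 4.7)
My plan is to proceed in close parallel to the proof of Lemma~\ref{le:mix}, treating each mixed operator $\bb^n[T^m_{\mathcal{NF}}, u, \ldots, u]$ and $\bb^m[T^n_{\mathcal{NF}}, u, \ldots, u]$ as an $(n+m-1)$-multilinear Fourier multiplier. The first has symbol $\si_{n,m} = \frac{ik(k_1+\cdots+k_m)}{H_m(k_1,\ldots,k_m)}$ with the interior restriction $k_1 \gg \max(|k_2|,\ldots,|k_m|)$ from $T^m_{\mathcal{NF}}$ and the outer restriction $|k_1+\cdots+k_m| \gg \max_{j\geq m+1}|k_j|$ from $\bb^n$, forcing the dominant index to satisfy $k_1 \sim k$; the companion operator has the analogous symbol $\si_{m,n}$ with the roles of $m$ and $n$ swapped. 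By Proposition~\ref{pro:disp}, only Cases A and B can occur. Case A ($H_{n+m-1} \gtrsim k_{\max}^2$) is routine: combining the gain $|H_m|^{-1} \lesssim k_1^{-2}$ from the denominator of $\si_{n,m}$ with condition \eqref{eq:prop1a} reduces matters to requiring only $\ga \leq 1$, exactly as in Lemma~\ref{le:mix}.

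For Case B (resonance $k_1 = k$, hence $k_2+\cdots+k_{n+m-1} = 0$), I would mimic the cancellation-extraction in Lemma~\ref{le:mix} by setting $\mu_{n,m} = \frac{i}{3(k_2+\cdots+k_m)}$ and decomposing $T^{n+m-1}_{\si_{n,m}} = T_{\si_{n,m}}^A + T_{\si_{n,m}-\mu_{n,m}}^B + T_{\mu_{n,m}}^B$. The algebraic identity that produced \eqref{eq:simu} depended only on the expansion of $H_m$ in the $\wt{k_j}$ variables and the high-low restriction inside $T^m_{\mathcal{NF}}$, so the pointwise bound
\[
|\si_{n,m}-\mu_{n,m}| \lesssim \frac{k_{\max_2} k_{\max_3} k_{\max_4}}{k_1^2}
\]
transfers verbatim. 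Applying \eqref{eq:prop2a} with $s_1 = s$ and $s_2 = \min(s,1)$ then yields the smoothing estimate for $T_{\si_{n,m}-\mu_{n,m}}^B$ throughout the range $\ga < \min(3s-1, 2)$.

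The main obstacle, and the reason the lemma is stated as a sum of the two mixed contributions rather than each separately, is the cancellation of the $\mu$-pieces. Writing $G_p(\ell) := \sum_{k_2+\cdots+k_p = \ell,\, |\ell|\ll k} \prod_{j=2}^p u_{k_j}$, the $k$th Fourier coefficient of $T_{\mu_{n,m}}^B$ reduces to $e^{itk^3} f_k\, e^{ikx} \sum_{\ell\in\bz^*} \frac{i}{3\ell} G_m(\ell) G_n(-\ell)$. Unlike the symmetric case $n = m$, where $G_n(\ell)G_n(-\ell) = |G_n(\ell)|^2$ is even in $\ell$ and annihilates the odd weight $1/\ell$, the mixed product $G_m(\ell)G_n(-\ell)$ has no such symmetry and the sum does not vanish term by term. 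However, the companion operator $T_{\mu_{m,n}}^B$ yields $e^{itk^3} f_k\, e^{ikx} \sum_{\ell'\in\bz^*} \frac{i}{3\ell'} G_n(\ell') G_m(-\ell')$, and substituting $\ell' \mapsto -\ell$ converts this into $-\sum_\ell \frac{i}{3\ell} G_m(\ell) G_n(-\ell)$, exactly negating the previous sum. Since both mixed terms enter \eqref{eq:w6} with the common coefficient $abnm$, the identity $T_{\mu_{n,m}}^B + T_{\mu_{m,n}}^B \equiv 0$ holds, and the lemma follows by combining the Case A and $T_{\si-\mu}^B$ estimates above.
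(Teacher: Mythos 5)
Your proof is correct and follows essentially the same route as the paper: Case A and the $\si-\mu$ bound \eqref{eq:simu} carry over from Lemma~\ref{le:mix}, and the $\mu$-pieces cancel under the $\ell\mapsto-\ell$ substitution. The only cosmetic difference is that you keep the two symbols $\si_{n,m}$, $\si_{m,n}$ and their corrections $\mu_{n,m}$, $\mu_{m,n}$ separate and combine them at the end, whereas the paper works directly with the combined symbol $\si = \si_{n,m}+\si_{m,n}$ and $\mu = \mu_{n,m}+\mu_{m,n}$ throughout.
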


\begin{proof}
Here both of them are $n+m-1$~multi-linear estimates.  Two symbols interact with each other to result in a surprising cancellation.  Symbol involved in this estimate is 
\[
\si =  \f{ik (k_1 + \cdots+ k_m)}{H_m (k_1, \cdots, k_m)} + \f{ik (k_1 + \cdots+ k_n)}{H_n (k_1, \cdots, k_n)}  .
\]
Once again, frequency restriction forces $k_1 \gg \max( |k_2| ,\cdots, |k_{n+m-1}|)$, so the only cases to consider are Case A and B of Proposition~\ref{pro:disp}.  As before, we decompose
\[
T^{2n-1}_\si = T_{\si}^A + T_{\si-\mu}^B + T_{\mu}^B
\]
where the restriction $B$ is defined analogously as \eqref{eq:rest} and $\mu$ is given by
\[
\mu  = \f{i}{3(k_2 + \cdots + k_m)} + \f{i}{3(k_2+ \cdots + k_n)}.
\]
Following the algebra in the previous proof leads to the size estimate for $\si - \mu$ given in \eqref{eq:simu}.  Thus, estimates for the first two terms are identical to the one given in the proof of Lemma~\ref{le:mix}.

It remains to show the cancellation for $T_{\mu}^B$.
 Its $k$~th Fourier coefficient is written as 
\[
e^{itk^3 }f_k e^{ik x} \sum_{\tiny\begin{array}{c} k_2 + \cdots + k_{2n-1} = 0\\ k \gg \max(k_2,\cdots, k_{2n-1})\\ k_2 + \cdots + k_n \neq 0 \end{array} }\left(\f{i}{3(k_2 + \cdots + k_n)}+ \f{i}{3(k_2 + \cdots + k_m)}\right)  \prod_{j=2}^{n+m-1} u_{k_j} .
\]
The sum above can be written as
\begin{align*}
& \sum_{l\in \bz^* }\f{i}{3l} \sum_{\tiny\begin{array}{c} k \gg \max(k_2,\cdots, k_{n})\\ k_2 + \cdots + k_n = l \end{array} } \prod_{j=2}^{n} u_{k_j}  
 \sum_{\tiny\begin{array}{c} k \gg \max(k_{n+1},\cdots, k_{n+m-1})\\ k_{n+1} + \cdots + k_{n+m-1} = -l \end{array} } \prod_{j'=n+1}^{n+m-1} u_{k_{j'}} \\
 &+ \sum_{l\in \bz^* }\f{i}{3l} \sum_{\tiny\begin{array}{c} k \gg \max(k_2,\cdots, k_{m})\\ k_2 + \cdots + k_m = l \end{array} } \prod_{j=2}^{m} u_{k_j}  
 \sum_{\tiny\begin{array}{c} k \gg \max(k_{m+1},\cdots, k_{n+m-1})\\ k_{m+1} + \cdots + k_{n+m-1} = -l \end{array} } \prod_{j'=m+1}^{n+m-1} u_{k_{j'}} 
 \end{align*}
Note that we can use symmetry in the second sum to reassign $(k_{m+1},\cdots, k_{n+m-1})\mapsto (k_1,\cdots, k_n)$ and produce
\[
-\sum_{l\in \bz^* }\f{i}{3l} \sum_{\tiny\begin{array}{c} k \gg \max(k_2,\cdots, k_{n})\\ k_2 + \cdots + k_n = l \end{array} } \prod_{j=2}^{n} u_{k_j}  
 \sum_{\tiny\begin{array}{c} k \gg \max(k_{n+1},\cdots, k_{n+m-1})\\ k_{n+1} + \cdots + k_{n+m-1} = -l \end{array} } \prod_{j'=n+1}^{n+m-1} u_{k_{j'}} 
\]
 which cancels completely with the first sum.  This proves our desired estimate.
\end{proof}

\section{Proof of Theorem~\ref{th:smoothing}}
\label{sec:5}

From the equation for $w$, note that we have
\[
\n{w}{Y^{s+\ga}_T} \lesssim \n{T^n_{\mathcal{NF}}(f,\cdots f)}{H^{s+\ga}} +  \n{T^m_{\mathcal{NF}}(f,\cdots f)}{H^{s+\ga}}  + \n{\eqref{eq:w1}}{Z^{s+\ga}} + \cdots \n{\eqref{eq:w6}}{Z^{s+\ga}}
\]

Using Lemmas from this section, we can bound these by
\begin{align*}
\n{w}{Y^{s+\ga}_T} &\lesssim \n{f}{H^s} (\n{f}{H^{\f{1}{2}+\ve}}^n + \n{f}{H^{\f{1}{2}+\ve}}^m) \\
	&+ T^{\ve} \n{u}{Y_T^s} \left(\n{u}{Y_T^{\min(s,1)}}^{n-1}+ \n{u}{Y_T^{\min(s,1)}}^{m-1}\right)\\
	&+T^{\ve} \n{w}{Y_T^{s+\ga}} \left(\n{u}{Y_T^{\f{1}{2}+\ve}}^{n-1}+\n{u}{Y_T^{\f{1}{2}+\ve}}^{m-1}\right)\\
	&+T^{\ve} \n{f}{H^s} \left(\n{u}{Y_T^{\min (s,1)}}^{2n-2}+\n{u}{Y_T^{\min(s,1)}}^{n+m-2}\right)\\
	&+ T^{\ve} \n{f}{H^s} \left(\n{u}{Y_T^{\min(s,1)}}^{2n-2}  \n{u}{Y_T^{\min(s,1)}}^{n+m-2}\right).
\end{align*}
Now, selecting $T< T(\n{f}{H^{\f{1}{2}+\ve}})$ from Theorem~\ref{th:lwp}, we get
\begin{align*}
\n{w}{Y_T^{s+\ga}} &\lesssim \n{f}{H^s} (\n{f}{H^{\f{1}{2}+\ve}}^n + \n{f}{H^{\f{1}{2}+\ve}}^m) \\
	&+ T^{\ve} \n{f}{H^s} \left(\n{f}{H^{\min(s,1)}}^{n-1}+ \n{f}{H^{\min(s,1)}}^{m-1}\right)\\
	&+T^{\ve} \n{w}{Y_T^{s+\ga}} \left(\n{f}{H^{\f{1}{2}+\ve}}^{n-1}+\n{f}{H^{\f{1}{2}+\ve}}^{m-1}\right)\\
	&+T^{\ve} \n{f}{H^s} \left(\n{f}{H^{\min (s,1)}}^{2n-2}+\n{f}{H^{\min(s,1)}}^{n+m-2}\right)\\
	&+ T^{\ve} \n{f}{H^s} \left(\n{f}{H^{\min(s,1)}}^{2n-2} +  \n{f}{H^{\min(s,1)}}^{n+m-2}\right).
\end{align*}
Selecting $T(\n{f}{H^{\f{1}{2}+\ve}})$ small so that 
\[
T^\ve  \left(\n{f}{H^{\f{1}{2}+\ve}}^{n-1}+\n{f}{H^{\f{1}{2}+\ve}}^{m-1}\right) \ll 1,
\]
we obtain that $\n{w}{Y_T^s} \leq C(\n{f}{H^{\min(s,1)}}) \n{f}{H^s}$.\\

Recalling \eqref{eq:transf}, for $\ga < 1$,
\begin{align*}
\n{u(t) - e^{t\p_x^3} f}{C_t^0([0,T]; H_x^{s+\ga})}  &\lesssim \n{T_{\mathcal{NF}}^n}{H^{s+\ga}} + \n{T_{\mathcal{NF}}^m}{H^{s+\ga}} +\n{w(t)}{H^{s+\ga}}\\
	&\lesssim \n{u(t)}{H^s} \left(\n{u(t)}{H^{\f{1}{2}+\ve}}^{n-1}+ \n{u(t)}{H^{\f{1}{2}+\ve}}^{m-1}\right) + \n{w(t)}{H^{s+\ga}}\\
	&\lesssim \n{u}{Y_T^s} \left(\n{u}{Y_T^{\f{1}{2}+\ve}}^{n-1}+ \n{u}{Y_T^{\f{1}{2}+\ve}}^{m-1}\right) + \n{w}{Y_T^{s+\ga}}\\
		&\lesssim_{\n{f}{H^{\min(s,1)}}} \n{f}{H^s}.
\end{align*}
This proves Theorem~\ref{th:smoothing}.

\begin{remark}
Note that, for $s\geq 1$, if we assume a priori control of $H^1$~norm, the length of time increment is uniform over a global iteration.  Also, the implicit constant depends on the $H^1$ norm, which is assumed to be  controlled.  So for $s\geq 1$ and $\ve>0$, 
\begin{equation}\label{eq:smooth}
\n{u(t) - e^{(t-nT)\p_x^3} u(nT)}{C^0_t( [nT, (n+1)T], H_x^{s+1-\ve})} \leq C(s,\ve, P,\n{f}{H^1}) \n{u(nT)}{H^s}.
\end{equation}
\end{remark}

\section{Proof of Theorem~\ref{th:main}}
\label{sec:6} 
First we will prove the statement for $s\in (1,2)$.  Take $T= T(\n{f}{H^1})$ by selecting $\ve = \f{1}{2}$ in Theorem~\ref{th:smoothing}.  We will use the statement of this Theorem in form of \eqref{eq:smooth}.\\

Let $t\in [nT, (n+1)T)$.  Define Fourier projection operators $\bp_{\leq n}$ and $\bp_{> n}$ to be Fourier frequency restrictions to $|k|\leq n$ and $|k|>n$ respectively.  Then using the decomposition
\[
u(t) = e^{(t-nT)\p_x^3} u(nT) + v_n(t),
\]
we have
\begin{equation}\label{eq:high}
\n{\bp_{>n} u(t)}{H^s} \leq \n{\bp_{>n} u(nt_0)}{H^s} + \n{\bp_{>n} v_n(t)}{H^s} 
\end{equation}
Take the second term from RHS of \eqref{eq:high}.  Using \eqref{eq:smooth} and a priori control of $H^1$ norm, we get
\[
\n{\bp_{>n} v_n(t)}{H^s} = \n{D^{s-2+\ve}\bp_{>n}  D^{2-\ve} v_n(t)}{L^2} \lesssim n^{s-2+\ve} \n{v_n(t)}{H^{2-\ve}} \lesssim n^{s-2+\ve} C(\n{f}{H^1}).
\]
Now, take the first term from RHS of \eqref{eq:high}.  We can iterate this term by
 \[
 u(nT)= e^{T\p_x^3} u((n-1)T) + v_{n-1}(nT).
 \]
Then, we have 
 \[
 \n{\bp_{>n} u(nT)}{H^s} \leq  \n{\bp_{>n-1} u(nT)}{H^s} \leq \n{\bp_{>n-1} u((n-1)T)}{H^s} +\n{\bp_{>n-1} v_{n-1} (nT)}{H^s} 
\]
where the last term on the RHS above is bounded by $(n-1)^{s-2+\ve} C(\n{f}{H^1})$ by applying \eqref{eq:smooth}.  Continuing this iteration all the way to the interval $[0,T]$ yield the following estimate.
\[
\n{\bp_{>n} u(t)}{H^s} \leq \n{f}{H^s} +C(\n{f}{H^1})  \sum_{k=1}^n k^{s-2+\ve}\lesssim  \n{f}{H^s} +C(\n{f}{H^1}) n^{s-1+\ve}.
\]
For the low-frequency component, a trivial estimate below is sufficient:
\[ 
\n{\bp_{\leq n} u(t)}{H^s} \lesssim n^{s-1} \n{u(t)}{H^1} = n^{s-1} C(\n{f}{H^1}).
\]
Since $n \sim \lan{t}$ (implicit constant depending on $T$), this gives 
\[
\n{u(t)}{H^s} \leq \n{\bp_{> n} u(t)}{H^s} +\n{\bp_{\leq n} u(t)}{H^s} \lesssim_{\n{f}{H^s}} n^{s-1+\ve} \sim_T \lan{t}^{s-1+\ve}.
\]
This proves the desired statement for $s\in [1,2)$.\\

We will use induction for $s>2$.  For some integer $N\geq 2$, let the claim hold for $s_0\in [N-1, N)$.  Then we will show that it holds for $s\in [N,N+1)$.  Note that Theorem~\ref{th:smoothing} allows for a uniform time-step $T= T(\n{f}{H^1})$.  Given a time $t \in [nT, (n+1)T)$, again we write
\[
u(t) = e^{(t-nT)\p_x^3} u(nT) + v_n(t).
\]
As before, application of \eqref{eq:smooth} yields
\begin{align*}
\n{\bp_{>n} v_n(t)}{H^{s}}& = \n{D^{s-N-1+2\ve}\bp_{>n}  D^{N+1-2\ve} v_n(t)}{L^2}\\
 &\lesssim n^{s-N-1+2\ve} \n{v_n(t)}{H^{N+1-2\ve}} \\
 &\lesssim_{\n{f}{H^1}} n^{s-N-1+2\ve} \n{u(nT)}{H^{N-\ve}}.
\end{align*}
Using the induction hypothesis, note  
\[
\n{u(nT)}{H^{N-\ve}} = O_{\n{f}{H^1}} (\lan{nT}^{N-1}) \lesssim_{T, \n{f}{H^1}} n^{N-1}.
\]  
Then
\[
\n{\bp_{>n} v_n(t)}{H^{s}}\lesssim_{\n{f}{H^1}} n^{s-2+2\ve}
\]

Now we can iterate the same way as before to arrive $\n{\bp_{>n} u(t)}{H^{s}} \lesssim n^{s-1+3\ve}$ after summation. The low-frequency estimate $\n{\bp_{\leq n} u(t)}{H^{s}} \lesssim n^{s-1} \n{f}{H^1}$ is still identical.  So together we get
\[
\n{u(t)}{H^s} \lesssim_{\n{f}{H^1}} n^{s-1+3\ve} \lesssim_{T} \lan{t}^{s-1+3\ve}.
\]
This proves Theorem~\ref{th:main} for all $s>1$.\\

The following statement immediately follows from analogous computations as above.  Its proof is omitted.
\begin{corollary}
For any $k\in\bn$, if $H^k$ norm of solutions for \eqref{eq:gKdV} can be controlled, then the global-in-time solution $u$ with initial value $f\in H^s(\bt)$ for $s > k$ satisfies the following polynomial-in-time bound:
\[
\n{u(t)}{H^{s}} \lesssim_{\ve, P, \n{f}{H^s}} \lan{t}^{s-k+ \ve} \qquad \textnormal{ for any } \ve >0.
\]
\end{corollary}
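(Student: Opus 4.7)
The plan is to follow verbatim the strategy behind the proof of Theorem~\ref{th:main}, with the a priori $H^1$ bound systematically replaced by the assumed a priori $H^k$ bound. Since $H^k\hookrightarrow H^1$ for $k\in\bn$, Theorem~\ref{th:lwp} still supplies global-in-time solutions with a uniform local existence time $T=T(\n{f}{H^1})$, and the nonlinear smoothing estimate \eqref{eq:smooth} remains valid on each increment $[nT,(n+1)T]$. Throughout, implicit constants are allowed to depend on $\n{f}{H^k}$, $P$, $\ve$ and on the fixed initial datum size $\n{f}{H^s}$.

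For the base case $s\in(k,k+1)$, fix $t\in[nT,(n+1)T)$ and split $u(t)=e^{(t-nT)\p_x^3}u(nT)+v_n(t)$. Applying \eqref{eq:smooth} at regularity $s_0=k$, which only costs the a priori $H^k$ bound on $u(nT)$, yields $\n{v_n(t)}{H^{k+1-\ve}}\leq C(\n{f}{H^k})$, so the high-frequency contribution obeys
\[
\n{\bp_{>n}v_n(t)}{H^s}\lesssim n^{s-k-1+\ve}\n{v_n(t)}{H^{k+1-\ve}}\lesssim_{\n{f}{H^k}} n^{s-k-1+\ve}.
\]
Telescoping back to the interval $[0,T]$ and summing, exactly as in the proof of Theorem~\ref{th:main}, gives $\n{\bp_{>n}u(t)}{H^s}\lesssim \n{f}{H^s}+C\,n^{s-k+\ve}$. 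The low-frequency estimate is trivial: $\n{\bp_{\leq n}u(t)}{H^s}\leq n^{s-k}\n{u(t)}{H^k}\lesssim n^{s-k}$. Combining these and using $n\sim\lan{t}$ yields the claimed $\lan{t}^{s-k+\ve}$ bound in this range.

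The inductive step for $s\in[N,N+1)$ with $N\geq k+1$ assumes the statement at every regularity $s_0<N$. Now \eqref{eq:smooth} is applied at regularity $N-\ve$ to produce $\n{v_n(t)}{H^{N+1-2\ve}}\lesssim \n{u(nT)}{H^{N-\ve}}$, and the induction hypothesis controls the latter by $\lan{nT}^{N-k+O(\ve)}\sim n^{N-k+O(\ve)}$. This delivers $\n{\bp_{>n}v_n(t)}{H^s}\lesssim n^{s-k-1+O(\ve)}$, after which the same summation and low-frequency estimate conclude \eqref{645} at level $s$ with exponent $s-k+O(\ve)$. The only point requiring care — and the main bookkeeping obstacle — is to verify at each step that the constants produced by \eqref{eq:smooth} and by the local existence time depend only on $\n{f}{H^1}\leq\n{f}{H^k}$, never on the higher norms being estimated; this is built into the formulations of Theorem~\ref{th:lwp} and \eqref{eq:smooth}, so no new ideas beyond those already exploited for Theorem~\ref{th:main} are needed, which is presumably why the paper omits the proof.
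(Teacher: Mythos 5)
Your argument is correct and is precisely the "analogous computation" the paper invokes: replace the a priori $H^1$ control by the assumed $H^k$ control, apply \eqref{eq:smooth} at level $k$ in the base case $s\in(k,k+1)$ (valid since $H^k\hookrightarrow H^1$ keeps the uniform time-step $T=T(\n{f}{H^1})$ and the smoothing constant finite), and run the same frequency splitting, telescoping sum, and induction over unit intervals $[N,N+1)$ for $N\geq k+1$. The bookkeeping — that all implicit constants depend only on $\n{f}{H^k}$ (through $\n{u(nT)}{H^k}$ and the $H^1$-dependent constant in \eqref{eq:smooth}), never on the higher norms being estimated — is exactly right and matches the structure of the paper's own proof of Theorem~\ref{th:main}.
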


\end{document}